\definecolor{hot}{RGB}{65,105,225}
\newcommand{\CN}{\mathbb{C}^{n}}
\newcommand{\C}{\mathbb{C}}
\newcommand{\Z}{\mathbb{Z}}
\newcommand{\K}{\mathcal{L}}
\newcommand{\V}{\mathcal{V}}
\newcommand{\W}{\mathcal{W}}
\newcommand{\CC}{\mathcal{CC}}
\newcommand{\IH}{\mathrm{IH}}
\theoremstyle{plain}
\newtheorem{theorem}{Theorem}[section]
\newtheorem{prop}[theorem]{Proposition}
\newtheorem{lm}[theorem]{Lemma}
\newtheorem{cor}[theorem]{Corollary}
\newtheorem{lemma}[theorem]{Lemma}
\newtheorem{thrm}[theorem]{Theorem}
\theoremstyle{definition}
\newtheorem{defn}[theorem]{Definition}
\newtheorem{que}[theorem]{Question}
\newtheorem{rmk}[theorem]{Remark}
\newtheorem{ex}[theorem]{Example}
\newtheorem*{ex*}{Example}
\def\be{\begin{equation}}
\def\ee{\end{equation}}
\def\bt{\begin{thrm}}
\def\et{\end{thrm}}
\def\bc{\begin{cor}}
\def\ec{\end{cor}}
\def\br{\begin{rmk}}
\def\er{\end{rmk}}
\def\bp{\begin{prop}}
\def\ep{\end{prop}}
\def\bl{\begin{lm}}
\def\el{\end{lm}}
\def\bex{\begin{ex}}
\def\eex{\end{ex}}
\def\bd{\begin{defn}}
\def\ed{\end{defn}}
\def\sD{\mathscr{D}}
\newcommand\sA{{\mathcal A}}
\newcommand\sG{{\mathcal G}}
\newcommand\sV{{\mathcal V}}
\newcommand\sL{\mathcal{L}}
\newcommand\sB{\mathcal{B}}
\newcommand\pp{{\mathbb{P}}}
\DeclareMathOperator{\codim}{codim}              
\DeclareMathOperator{\ord}{ord}
\DeclareMathOperator{\IC}{IC}
\DeclareMathOperator{\Perv}{Perv}
\def\ra{\rightarrow}
\def\bC{\mathbb{C}}
\def\al{\alpha}
\def\cH{\mathcal{H}}
\def\cO{\mathcal{O}}
\def\bQ{\mathbb{Q}}
\def\bN{\mathbb{N}}
\newcommand{\hooklongrightarrow}{\lhook\joinrel\xrightarrow}
\title{Length  of Perverse Sheaves on Hyperplane Arrangements}
\author{Nero Budur}
\address{Department of Mathematics, KU Leuven, 
Celestijnenlaan 200B, B-3001 Leuven, Belgium} 
\email{nero.budur@kuleuven.be}
\author{Yongqiang Liu}
\address{Basque Center for Applied Mathematics, Alameda de Mazarredo 14, 
48009 Bilbao, Spain} 
\curraddr{The Institute of Geometry and Physics, University of Science and Technology of China,  No.96,  JinZhai Road Baohe District, Hefei, Anhui, 230026, P.R.China} 
\email{yliu@bcamath.org}
\begin{document}


 \begin{abstract} In this article we address the length of perverse sheaves arising as direct images of rank one local systems on complements of hyperplane arrangements. In the case of a cone over an essential line arrangement with at most triple points, we provide  combinatorial formulas for these lengths. As by-products, we also obtain in this case combinatorial formulas for the intersection cohomology Betti numbers of rank one local systems on the complement with same monodromy around the planes.
 \end{abstract}

\maketitle

\begin{center}
{\it Dedicated to the memory of Prof. \c{S}tefan Papadima}
\end{center}

\setcounter{tocdepth}{1}

\tableofcontents

\section{Introduction}
\subsection{Overview.}

Perverse sheaves and intersection cohomology are fundamental objects encoding the complexity of the topology of a stratified space.  Every perverse sheaf on a complex algebraic variety (and using field coefficients for sheaves) has a finite maximal filtration, called {\it composition series}, with non-zero simple successive quotients. The {\it length} of a perverse sheaf counts the number of simple objects in any composition series. There is currently no known algorithm to compute the length of a perverse sheaf. 

The simplest perverse sheaves are the local systems. Understanding the length of local systems amounts to understanding the geometry of the GIT quotient map from the space of representations of the fundamental group of the variety to the moduli of (semi-simple) local systems. 

A next natural class of perverse sheaves to consider are direct images via the open embedding of local systems on the complement of a hypersurface. In this article we address the length of such perverse sheaves on $\CN$ constructed from rank one local systems on the complement of an arrangement of hyperplanes. 

\subsection{Notation.}

We denote by $\Perv(\CN)$ the category of $\bC$-perverse sheaves on $\CN$. Let $G(\CN)$ be the Grothendieck group of $\Perv(\CN)$, namely the free abelian group on symbols $[P]$, one for each   perverse sheaf $ P\in \Perv(\CN)$, 
modulo the subgroup generated by the relation  $[Q]=[P]+[R]$ for every short exact sequence of perverse sheaves $0\to P\to Q\to R\to 0$. The collection of isomorphism classes of simple perverse sheaves gives a basis for  $G(\CN)$. For any $P\in \Perv(\CN)$, $[P]$ has an unique way to be written down as a sum over this basis in $G(\CN)$:
$$ [P]= \sum_{[Q]} a_{[Q]}(P)\cdot[Q],$$
where $[Q]$ runs over the isomorphism classes of simple perverse sheaves $Q$, and $a_{[Q]}(P)$ are non-negative integers of which only finitely many of them are non-zero.  The length of the perverse sheaf $P$ is $$ \ell(P)= \sum_{[Q]} a_{[Q]}(P).$$ 
For $P, R\in \Perv(\CN)$, we say that $$[P]\geq [R]$$ if $a_{[Q]}(P)\geq a_{[Q]}(R)$ for every $[Q]$.

The characteristic cycle of $P$ will be denoted by $\CC(P)$. Recall that $\CC$ factors through the Grothendieck group $G(\CN)$, see \cite[Section 4.3]{D2}. { Note that the Grothendieck group of constructible sheaves is isomorphic to the Grothendieck group of perverse sheaves by using perverse cohomology.}



\subsection{General results for hyperplane arrangements.}\label{subGA}

Let $$\sA=\lbrace H_{1},\cdots,H_{r} \rbrace$$ be an arrangement of mutually distinct hyperplanes in $\CN$.
Let $$j:U=\CN \setminus \bigcup_{i=1}^r H_i\ra \CN$$ denote the open embedding of the complement. Let $L$ be a rank one $\bC$-local system on $U$. Since $j$ is an affine morphism and quasi-finite,  $Rj_*$ restricts to a functor on the categories of perverse sheaves, hence $Rj_{\ast} (L[n])$  is a perverse sheaf on $\CN$, see \cite[Corollary 5.2.17]{D2}.  

{
The {\it center} of $\sA$ is $\bigcap_{i=1}^r H_i$. The arrangement $\sA$ is called {\it central} if the center is not empty and contains the origin $0$.  The arrangement $\sA$ is called {\it essential} if there is a sub-arrangement $\sB\subset \sA$ such that the center of $\sB$ is a point. For more details see \cite[Definition 2.6]{D3} .
}

An {\it edge} of $\sA$ is either $\bC^n$ or a non-empty set that is an intersection  of hyperplanes in $\sA$. Let $E(\sA)$ denote the set of all edges of $\sA$. 

For each edge $W$, $\sA$ induces a hyperplane arrangement $\sA^W$ in $W$, obtained by intersecting $W$ with the hyperplanes not containing it. To the local system $L$ one can associate a rank one local system $L^W$ on the corresponding complement $U^W=W\setminus\sA^W$.  
On the other hand, $\sA$ also induces a hyperplane arrangement $\sA_W$ in $\CN/W$ and a  rank one local system $L_W$ on the complement $U_W$.  See Section 3 for the precise definitions of the triples $(\sA^W, U^W , L^W)$ and $(\sA_W, U_W, L_W)$. 

We have the following relation between the length of $Rj_*L[n]$ and top-degree cohomology of associated local systems:

\bt \label{arr} Let $L$ be a rank one local system on the complement $U$ of an essential hyperplane arrangement $\sA$ in $\CN$. Then 
\be \label{df}
 [Rj_* L[n]] \geq 
  \sum _{ W \in E(\sA)}  \dim H^{w}(U_W, L_W)\cdot [\IC(W, L^W)],
\ee and hence
\be \label{length}
 \ell(Rj_* L[n]) \geq  \sum _{ W \in E(\sA)}  \dim H^{w}(U_W, L_W), 
\ee where the sums are over all the edges $W$ of $\sA$, $w=\codim W$, and $\IC$ denotes the intersection chain complex. Moreover, (\ref{df}) holds as equality if and only if so does (\ref{length}).
\et


\br \label{positive results}

(a) If $L$ is the constant sheaf, then $L^W$ and $L_W$ are both constant sheaves, and 
\be \label{constant}  [Rj_* \bC_U[n]] = 
  \sum _{ W \in E(\sA)}  \dim H^{w}(U_W)\cdot [\bC_W[n-w]].
\ee
Since the Betti numbers of a complement of a hyperplane arrangement are combinatorial by \cite{OT}, this gives a combinatorial formula for the length of $Rj_{\ast}\bC_U[n]$. This result has been obtained by several people: 
 \cite{Lo, BS, Oa, Pe, BG}.

(b) If $\sA$ is a line arrangement $(n=2)$ or a generic hyperplane arrangement, then (\ref{length}) holds as equality for any rank one local system, see \cite{AB1, AB2}. {Recall that an arrangement in $\bC^n$ is {\it generic} if for every non-empty subset $S\subset\sA$, the intersection of all hyperplanes in $S$ is empty if $|S|>n$ and has codimension $|S|$ if $|S|\le n$.}

 (c) For any rank one local system $L$ on $U$, one has a combinatorial upper bound:
$$ \ell(Rj_* L[n] )\leq (Rj_{\ast}\bC_U[n]).$$
Moreover, this is an equality if and only $L$ is the constant sheaf, see Proposition \ref{upper bound}. This was kindly pointed out to us by a referee.

(d) The inequality (\ref{length}) can be strict, see Example \ref{counter}.

\er

It is easy to see that the length problem for $Rj_* L[n]$ can always be reduced to the central essential hyperplane arrangement case.  A complete combinatorial answer for when length one occurs is thus given by the next result.

Consider the algebraic group $$M_B(U):= \mathrm{ Hom} (H_{1}(U, \Z),\C^{\ast})\cong(\C^{\ast})^{r},$$ the moduli space of rank one local systems on $U$.  A tuple $\underline{t}=(t_{1},\cdots, t_{r}) \in (\C^{\ast})^{r}$ corresponds to the rank one local system $L_{\underline{t}}$ on $U$ with monodromy $t_{i}$ around the hyperplane $H_i$. With this notation, one has:

\bt \label{BLSW}\cite[Theorems 1.2 and 1.5]{BLSW}  If $\sA$ is a central hyperplane arrangement in $\bC^n$ and $L$ a rank one local system on the complement, the following are equivalent:
\begin{itemize}
\item[(a)] $\ell(Rj_*L[n])=1,$
\item[(b)] $Rj_!L[n]=Rj_*L[n]=\IC(\CN,L),$
\item[(c)] 
$
\prod_W(\prod_{W\subset H_i}t_i-1)\neq0,
$
where ${\underline{t}}\in (\bC^*)^r$ with $L=L_{\underline{t}}$, and $W$ are the dense edges. For the definition of dense edges, see the end of Definition \ref{main def}.
\end{itemize} 
\et

We give now a combinatorial answer for when length two occurs in Theorem \ref{arr}:

\bt\label{thrmL2}  If $\sA$ is an essential hyperplane arrangement in $\bC^n$ and $L$ a rank one local system on the complement, the following are equivalent:

\begin{itemize}
\item[(a)] $\ell(Rj_{\ast} L[n])=2$, 
\item[(b)] there exists a dense edge $W$ of $\sA$ such that
$$
[Rj_*L[n]] = [\IC(\CN,L)]+[\IC({W},L^W)],
$$
\item[(c)] there exists a dense edge $W$ of $\sA$ with $\vert \chi (\mathbb{P}(U_{W}))\vert =1$, $\prod_{W\subset H_i}t_i=1$, and $\prod_{W'}(\prod_{W'\subset H_i}t_i-1)\neq 0,$ where ${\underline{t}}\in (\bC^*)^r$ with $L=L_{\underline{t}}$, and $W'$ are the dense edges different from $W$.
\end{itemize}
\et

Here and throughout, $\chi(\_)$ denotes the topological Euler characteristic.

We conjecture that the set of $L$ with $\ell(Rj_{\ast} L[n])=k$ admits a combinatorial description for every $k$, that is, the length function is combinatorial.

\subsection{Plane arrangements.} We  show that (\ref{length}) holds as equality for most of cases in dimension 3.

\bt \label{dim 3} Let $L_{\underline{t}}$ be a rank one local system on the complement of an essential plane arrangement $\sA$ in $\C^3$ corresponding to ${\underline{t}} \in (\C^{\ast})^{r}$. If $t_i\neq 1$ for every $1\leq i \leq r$, then (\ref{length}) holds as equality.
\et

If $t_i=1$ for some $i$, we  have a partial result:

\bp \label{upper} Let $L_{\underline{t}}$ be a rank one local system on the complement of a central essential plane arrangement $\sA$ in $\C^3$ corresponding to $\underline{t} \in (\C^{\ast})^{r}$. Then  $\ell (Rj_* L_{\underline{t}}[3])$ can be computed in terms of the cohomology jump loci $\sV_j^2(U(\sB))$, defined as in (\ref{jump loci}), of the complements of all possible  
sub-arrangements $\sB$ of $\sA$ and all $j\geq 1$. Moreover, 
 \be 
 0\leq \ell(Rj_* L_{\underline{t}}[3]) -\sum _{ W \in E(\sA)}  \dim H^{w}(U_W, L_W) \leq \#\{i \mid t_i =1\}
 \ee
  where the sum is over all the edges $W$ of $\sA$ and $w=\codim W$.
\ep 

This allows us to show that (\ref{length}) can fail to be an equality, see Example \ref{counter}. 

A folklore conjecture is that cohomology jump loci of rank one local systems on complements of hyperplane arrangements admit combinatorial formulas. This would imply that $\ell(Rj_*L[3])$ is also combinatorial, as conjectured above.

\subsection{By-products: intersection cohomology and characteristic cycles.} As an application, when (\ref{df})  holds as equality one can turn it around by deletion-restriction method and induction to provide  formulas for intersection cohomology and characteristic cycles of intersection complexes. This is also related to formulas which have appeared already in Nang-Takeuchi \cite[Theorem 1.1]{NT} for the case of hypersurfaces with isolated singularities, see Remark \ref{rmkNT}.

We restrict for simplicity to central essential plane arrangements in $\bC^3$ and to local systems with the same monodromy around each hyperplane. To fix the notation, for $s\in\bC^*$ let $L_s$ denote the rank one local system on $U$ corresponding to $(s, \ldots,s) \in (\bC^*)^r$. Since $IC(\bC^3,L_1)$ is the (shifted) constant sheaf, we  focus on the case $s\neq 1$.

For a central essential plane arrangement $\sA$ in $\C^{3}$, the set of edges is $$E(\sA)=\lbrace \bC^3, H_{1}, \cdots, H_{r}, \Lambda_1, \cdots, \Lambda_l, 0 \rbrace,$$  
where $\Lambda_j$ are the mutually distinct 1-dimensional edges. Denote by $m_j$ the number of planes of $\sA$ containing $\Lambda_j$.

Let $f=\prod_{i=1}^rf_i$ be an equation defining the hyperplane arrangement $\sA$, with $f_i$ linear polynomials defining the hyperplanes $H_i$.
The smooth variety $$F:= \{x\in \C^3 \mid f(x)=1\}$$
is called the {\it Milnor fiber of $\sA$}. It is classical, see \cite[\S 5.1]{D3} for example, that $F$ is diffeomorphic to the Milnor fiber of $f$ at the origin and the monodromy action on $$H^*(F)=H^*(F,\bC)$$ is semi-simple and has order $r$. 
Let $H^*(F)_s$ denote the monodromy eigenspace of $H^*(F)$ with eigenvalue $s\in\C^*$. Let $\Delta^*(t)$ denote the  characteristic polynomial the monodromy on $H^*(F)$. Then it is known (see \cite[(5.5)]{D3}) that $\Delta^0(t)=t-1$ and $$\dfrac{\Delta^0(t) \Delta^2(t)}{\Delta^1(t)}= (t^r-1)^{\chi(F)/r}.$$
Note that $\chi(F)/r$ is combinatorially determined: $$ \chi(F)/r= -2r+3+\sum_{j=1}^l (m_j-1).$$ So knowing one of $\Delta^1(t)$ and $\Delta^2(t)$ is equivalent to knowing the other one.  

Set  $$\delta_{j}(s)=  \left\{ \begin{array}{ll}
m_j-1  &  \text{ if } s^{m_j} \neq 1, \\
1 &  \text{ if } s^{m_j}=1,\\
\end{array}\right. $$ 
and 
 $$\delta'_j(s)=  \left\{ \begin{array}{ll}
m_j-2 & \text{ if } s^r=1 \text{ and } s^{m_j}=1, \\
0 & \text{ otherwise}.\\
\end{array}\right. $$ 
   
\begin{theorem} \label{cc} Let $\sA$ be a central essential plane arrangement in $\C^3$. Let $1\neq s\in\bC^*$ and let
 $L_s$ be the  rank one local system on the complement of $\sA$ with monodromy $s$ around each plane in $\sA$. Then:

(a) Denoting by $T_{W}^{\ast}\bC^3$ the conormal bundle of the edge $W$,
$$
\begin{array}{rl}
\CC(IC(\bC^3,L_s))  & =
T_{\C^{3}}^{\ast}\C^{3}+\sum_{i=1}^r T_{H_{i}}^{\ast}\C^{3} +\sum_{j=1}^l \delta_j(s) \cdot T_{\Lambda_{j}}^{\ast}\C^{3}+\\
& + \left(\sum_{j=1}^l (\delta_j(s)+\delta'_j(s)) -r+1 -\dim H^2(F)_s\right)  \cdot T_{0}^{\ast}\C^{3}.
\end{array}
$$

(b) In particular, $$\chi(IC(\bC^3,L_s))=  \sum_{j=1}^l \delta'_j(s) { - }\dim H^2(F)_s.$$

(c) Moreover, 
$$
\dim \IH^i(\bC^3,L_s)=\left\{
\begin{array}{cl}
0 & \text{ if }i\neq 1, 2,\\
\dim H^1(U,L_s)=\dim H^1(F)_s & \text{ if } i=1,\\
\dim H^2(F)_s+\dim H^1(F)_s -\sum_{j=1}^l \delta'_j(s)  & \text{ if }i=2.
\end{array}\right.
$$
\end{theorem}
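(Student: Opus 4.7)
The strategy is to apply the Grothendieck-group decomposition of $[Rj_*L_s[3]]$ provided by Theorem \ref{dim 3}, push it through the additive functors $\CC$ and $\chi$ (both of which factor through $G(\bC^3)$), and read off the invariants of $\IC(\bC^3, L_s)$. Since $s \neq 1$ forces $t_i = s \neq 1$ for every $i$, Theorem \ref{dim 3} applies and gives
\begin{equation*}
[Rj_*L_s[3]] = [\IC(\bC^3, L_s)] + \sum_i a_{H_i}[\IC(H_i,L^{H_i})] + \sum_j a_{\Lambda_j}[\IC(\Lambda_j, L^{\Lambda_j})] + a_0[\IC(\{0\}, L^0)]
\end{equation*}
in $G(\bC^3)$, where $a_W = \dim H^{\codim W}(U_W, L_W)$.

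The first task is to compute the multiplicities $a_W$ combinatorially. For $W = H_i$ one has $U_{H_i} \cong \bC^*$ with nontrivial local system (monodromy $s \neq 1$), so $a_{H_i} = 0$. For $W = \Lambda_j$, $a_{\Lambda_j}$ is the top cohomology of a rank-one local system of uniform monodromy $s$ on the complement of $m_j$ concurrent lines in $\bC^2$; this is a standard Milnor-fiber computation whose answer is an explicit function of $m_j$ and $s^{m_j}$, and is what generates the $\delta_j(s)$ pattern. For $W = \{0\}$, $a_0 = \dim H^3(U, L_s)$, which via the standard isomorphism of rank-one cohomology with $s$-eigenspaces on the global Milnor fiber $F$ reduces to an expression involving $\dim H^2(F)_s$ together with explicit combinatorial corrections.

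Next, the characteristic cycle of each lower-dimensional summand is computed by local vanishing-cycle analysis: for instance, $\CC(\IC(\Lambda_j, L^{\Lambda_j}))$ equals $T^*_{\Lambda_j}\bC^3$ (plus possibly a contribution at $T^*_0\bC^3$ when the monodromy of $L^{\Lambda_j}$ at the origin is nontrivial), and $\CC(\IC(\{0\}, L^0)) = T^*_0\bC^3$; similarly $\CC(Rj_*L_s[3])$ is assembled stratum by stratum from nearby-cycle data along each edge. Applying $\CC$ to the decomposition, using $a_{H_i} = 0$, and solving for $\CC(\IC(\bC^3, L_s))$ yields the formula in (a) after the combinatorial simplifications that produce $\delta_j(s)$ and $\delta_j'(s)$. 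Part (b) then follows either from Kashiwara's microlocal index formula applied to (a), or more directly by taking $\chi$ of the Grothendieck-group identity, using $\chi(Rj_*L_s[3]) = -\chi(U)$ together with the elementary Euler characteristics of the smaller IC's.

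For part (c), Artin vanishing on the affine space $\bC^3$ combined with the (co)support conditions for the intersection complex forces $\IH^i(\bC^3, L_s) = 0$ for $i \neq 1, 2$. The canonical morphism $\IC(\bC^3, L_s) \to Rj_*L_s[3]$ induces an isomorphism $\IH^1(\bC^3, L_s) \cong H^1(U, L_s)$ in the lowest nontrivial degree, while $H^1(U, L_s) \cong H^1(F)_s$ is the standard monodromy decomposition; then $\dim \IH^2(\bC^3, L_s)$ is forced by the Euler characteristic computed in (b). The most delicate step will be the explicit stratum-by-stratum computation of $\CC(Rj_*L_s[3])$ and the simplification of the output into the $\delta_j, \delta_j'$ shapes — particularly when the local monodromies take resonant values $s^{m_j} = 1$ or $s^r = 1$, where the various IC contributions interact. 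The one non-combinatorial input, which cannot be eliminated, is the global invariant $\dim H^2(F)_s$; it enters only through $a_0$ at the origin and is what pins down the last unknown in the characteristic cycle formula.
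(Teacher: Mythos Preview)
Your overall strategy matches the paper's: use the Grothendieck-group equality from Theorem~\ref{dim 3}, compute each summand, and subtract. Two points deserve comment.

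First, for $\CC(Rj_*L_s[3])$ you propose to ``assemble it stratum by stratum from nearby-cycle data.'' The paper instead uses the single observation $\CC(Rj_*L_s[3])=\CC(Rj_*\bC_U[3])$ (any rank one local system has the same characteristic cycle as the constant sheaf, and this is preserved by $Rj_*$ via Ginsburg's formula), which is already the explicit combinatorial cycle $T^*_{\bC^3}\bC^3+\sum_i T^*_{H_i}\bC^3+\sum_j(m_j-1)T^*_{\Lambda_j}\bC^3+(\sum_j(m_j-1)-r+1)T^*_0\bC^3$. This shortcut avoids any further local computation on the left-hand side.

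Second, your justification for $\IH^3(\bC^3,L_s)=0$ is incomplete: Artin vanishing on an affine variety gives $\bH^i(\bC^3,\IC)=0$ only for $i>0$, not for $i=0$, and the (co)support conditions alone do not close this gap. The paper uses that $\sA$ is \emph{central}, so $\IC$ is conical and global hypercohomology equals the stalk at the origin; then the strict support condition for the intersection complex gives $\mathcal H^0(\IC)_0=0$.

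On the other hand, your route to $\IH^1(\bC^3,L_s)\cong H^1(U,L_s)$ is cleaner than the paper's. Since $a_{H_i}=0$, the cokernel $Q$ of $\IC\hookrightarrow Rj_*L_s[3]$ is a perverse sheaf supported on the $1$-skeleton $\bigcup_j\Lambda_j$, hence $\bH^{i}(Q)=0$ for $i<-1$ and the long exact sequence gives the isomorphism directly. The paper instead re-invokes the Hodge-module exactness of $(j_3)_{!*}$ established inside the proof of Theorem~\ref{dim 3}, which is heavier machinery than needed here.
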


\br The above formulas show that the cycle $\CC(IC(\bC^3,L_s))$ and the numbers $\chi(IC(\bC^3,L_s))$, $\dim H^2(F)_s$,  $\dim H^1(F)_s$, $\dim \IH^1(\bC^3, L_s)$, and $\dim \IH^2(\bC^3, L_s)$ are combinationally determined if and only if one of these six terms is.
\er

 It is an old, still open conjecture that $\dim H^i(F)_s$ are combinatorial.
A recent result of Papadima-Suciu \cite[Theorem 1.2]{PS} proves this conjecture for the case of a central essential plane arrangement $\sA$ in $\bC^3$ that is a cone over a projective line arrangement with multiplicities at most three, that is, $m_j$ is either 2 or 3 for all $j$. More precisely, they have introduced a combinatorial invariant $\beta_3 (\sA) \in \{ 0,1,2\}$ of $\sA$ such that $$\Delta^1(t)=(t-1)^{r-1}(t^2+t+1)^{\beta_3(\sA)}.$$ One obtains then the following combinatorial formulas:

\begin{theorem} \label{PS} Let $\sA$ be a central essential plane arrangement in $\C^3$ that is a cone over a projective line arrangement with multiplicities at most three. Let $1\neq s\in\bC^*$ and let
 $L_s$ be the  rank one local system on the complement of $\sA$ with monodromy $s$ around each plane in $\sA$. Let $n_3(\sA)$ denote the number of 1-dimensional edges with multiplicity 3. Then:

(a)   $$
\CC(IC(\bC^3,L_s)) = 
T_{\C^{3}}^{\ast}\C^{3}+\sum_{i=1}^r T_{H_{i}}^{\ast}\C^{3} + \bigstar,
$$
where
\begin{align*}
\bigstar= 
\left\{ \begin{array}{ll}
{\sum_{m_j=2}  T_{\Lambda_{j}}^{\ast}\C^{3} +2\sum_{m_j=3}  T_{\Lambda_{j}}^{\ast}\C^{3}+ (\binom{r-1}{2}-n_3(\sA)) \cdot T_{0}^{\ast}\C^{3}} &  \text{ if }s^r\neq 1 \text{ and } s^3\neq 1, \\
\sum_{m_j=2} T_{\Lambda_{j}}^{\ast}\C^{3} +2\sum_{m_j=3}  T_{\Lambda_{j}}^{\ast}\C^{3}+ (r-2)\cdot T_{0}^{\ast}\C^{3},  &  \text{ if }s^r=1\text{ and } s^3\neq 1, \\
\sum_{j=1}^l  T_{\Lambda_{j}}^{\ast}\C^{3}+(\binom{r-1}{2} -2n_3(\sA)) \cdot T_{0}^{\ast}\C^{3},  & \text{ if }s^r\neq 1\text{ and } s^3=1, \\
\sum_{j=1}^l  T_{\Lambda_{j}}^{\ast}\C^{3}+(r-2-\beta_3(\sA))\cdot T_{0}^{\ast}\C^{3} , &  \text{ if }s^{r}=1\text{ and } s^3=1.\\
\end{array}\right.
\end{align*}

(b) $$
\chi(IC(\bC^3,L_s))=  \left\{ \begin{array}{ll}
n_3(\sA)-\binom{r-1}{2}  &  \text{ if } s^r=1\text{ and }s^3\neq 1, \\
2n_3(\sA)-\beta_3(\sA)-\binom{r-1}{2}  & \text{ if } s^{r}=1\text{ and } s^3=1,\\
0 & \text{ otherwise}. \\ 
\end{array}\right.
$$ 

(c)
$$\IH^0(\bC^3, L_s)=0=\IH^3(\bC^3, L_s),$$
$$\dim \IH^1(\bC^3, L_s)= \left\{ \begin{array}{ll}
\beta_3(\sA) &  \text{ if } s^r=1\text{ and } s^3=1,\\
0 & \text{ otherwise}, \\ 
\end{array}\right. $$
$$ \dim \IH^2(\bC^3, L_s)= \left\{ \begin{array}{ll}
\binom{r-1}{2} -n_3(\sA)  &  \text{ if } s^r=1\text{ and } s^3\neq 1, \\
2\beta_3(\sA)+\binom{r-1}{2} -2n_3(\sA) &  \text{ if } s^{r}=1\text{ and } s^3=1,\\
0 & \text{ otherwise}. \\ 
\end{array}\right. $$

{{(d)  \begin{align*}
\ell( Rj_*L_s[3])= 
\left\{ \begin{array}{ll}
1 &  \text{ if }s^r\neq 1 \text{ and } s^3\neq 1, \\
1+\binom{r-2}{2} -n_3(\sA)  &  \text{ if }s^r=1\text{ and } s^3\neq 1, \\
1+ n_3(\sA)  & \text{ if }s^r\neq 1\text{ and } s^3=1, \\
1+ \binom{r-2}{2} +\beta_3(\sA) &  \text{ if }s^{r}=1\text{ and } s^3=1.\\
\end{array}\right.
\end{align*}
}
}\end{theorem}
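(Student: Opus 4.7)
My plan is to specialize Theorems \ref{cc}, \ref{arr}, and \ref{dim 3} to arrangements with multiplicities at most $3$, feeding in the Papadima--Suciu formula $\Delta^1(t)=(t-1)^{r-1}(t^2+t+1)^{\beta_3(\sA)}$ at the one step that requires knowing $\dim H^i(F)_s$.

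\emph{Parts (a), (b), (c).} Under the hypothesis $m_j\in\{2,3\}$, the values of $\delta_j(s)$ and $\delta'_j(s)$ are immediate from their definitions: $\delta_j=1$ when $m_j=2$; for $m_j=3$, $\delta_j=2$ if $s^3\neq 1$ and $\delta_j=1$ if $s^3=1$; while $\delta'_j$ vanishes except when $s^r=s^3=1$ and $m_j=3$, where it equals $1$. Next I compute $\dim H^i(F)_s$. Since $s\neq 1$, the Papadima--Suciu formula gives $\dim H^1(F)_s=\beta_3(\sA)$ when $s^3=1$ and $0$ otherwise. From the relation $\Delta^0\Delta^2/\Delta^1=(t^r-1)^{\chi(F)/r}$ I derive
\[
\Delta^2(t)=(t-1)^{r-2}(t^2+t+1)^{\beta_3}(t^r-1)^{\chi(F)/r},
\]
and the combinatorial identity $l=\binom{r}{2}-2n_3(\sA)$, coming from $\sum_j\binom{m_j}{2}=\binom{r}{2}$ specialized to $m_j\in\{2,3\}$, converts the formula $\chi(F)/r=-2r+3+\sum_j(m_j-1)$ from the excerpt into a closed form in $r$ and $n_3(\sA)$. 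Reading the multiplicity of $s$ in $\Delta^2(t)$ case by case yields $\dim H^2(F)_s$, and substituting all of the above into Theorem \ref{cc}(a), (b), (c) produces the claimed formulas. A consistency check needed in Case C ($s^r\neq 1$, $s^3=1$) is that $\beta_3(\sA)>0$ forces $3\mid r$, since monodromy eigenvalues on $H^*(F)$ are $r$-th roots of unity and a primitive cube root of unity has order $3$; hence $\beta_3(\sA)=0$ in this case.

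\emph{Part (d).} Since $s\neq 1$, Theorem \ref{dim 3} upgrades (\ref{length}) to an equality, giving
\[
\ell(Rj_*L_s[3]) = \sum_{W\in E(\sA)} \dim H^{\codim W}(U_W,L_W).
\]
The edge $W=\bC^3$ contributes $1$; each $W=H_i$ contributes $\dim H^1(\bC^*,L_s)=0$ because $s\neq 1$; each $W=\Lambda_j$ contributes the top cohomology of the complement in $\bC^2$ of $m_j$ concurrent lines with uniform monodromy $s$, which after trivializing the tautological $\bC^*$-bundle over $\bP^1\setminus\{m_j\text{ points}\}$ and applying K\"unneth equals $m_j-2$ if $s^{m_j}=1$ and $0$ otherwise, so the total contribution of the $\Lambda_j$'s is $n_3(\sA)$ when $s^3=1$ and $0$ otherwise; and $W=\{0\}$ contributes $\dim H^3(U,L_s)$, which the mapping-torus description of $U$ as an $F$-bundle over $S^1$ (with Artin vanishing $H^3(F)=0$) reduces via the Wang sequence to $\dim H^2(F)_s$ when $s^r=1$ and $0$ otherwise. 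Assembling these contributions in the four cases, using the values of $\dim H^2(F)_s$ already computed in (a)--(c), yields (d).

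The hardest step is the computation of $\dim H^i(F)_s$, which rests entirely on the Papadima--Suciu input; once that is available, the rest is systematic case analysis using Theorem \ref{cc}, the identity $l=\binom{r}{2}-2n_3(\sA)$, and the four-way split on $(s^r,s^3)$.
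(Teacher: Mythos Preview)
Your proposal is correct and follows essentially the same route as the paper: specialize Theorem \ref{cc} by evaluating $\delta_j(s)$, $\delta'_j(s)$ for $m_j\in\{2,3\}$, use the Papadima--Suciu formula $\Delta^1(t)=(t-1)^{r-1}(t^2+t+1)^{\beta_3(\sA)}$ together with $\Delta^0\Delta^2/\Delta^1=(t^r-1)^{\chi(F)/r}$ and the combinatorial identity $\chi(F)/r=\binom{r-2}{2}-n_3(\sA)$ to obtain $\dim H^2(F)_s$, and for (d) invoke Theorem \ref{dim 3} so that (\ref{length}) is an equality and then sum the edge contributions. The paper's proof is simply a terser version of exactly this computation, noting in particular (as you also observe) that $3\nmid r$ forces $\beta_3(\sA)=0$.
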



\subsection{$\sD$-modules.} Due to the Riemann-Hilbert correspondence, every statement made so far can be restated (and proved) in terms of $\sD$-modules only. Relevant articles on the questions addressed in this article are sometimes written in terms of $\sD$-modules, e.g. \cite{AB1,AB2, BG, Gin, Oa}. To make this article accessible to $\sD$-module theorists, and conversely, to justify the topological statements one derives from the cited works, we recall now the algebraic counterparts of the two main topological objects studied in this article.

Let $\sD$ be the sheaf of linear algebraic differential operators on $\bC^n$. Let $f=\prod_{i=1}f_i$ be an equation defining the hyperplane arrangement $\sA$, with $f_i$ linear polynomials defining the hyperplanes $H_i$. Let $L_{\underline{t}}$ with ${\underline{t}}\in(\bC^*)^r$ be a rank one local system on $U=\{f\ne 0\}$. Fix ${\underline{\al}}\in\bC^r$ such that ${\underline{t}}=\exp(2\pi i {\underline{\al}})$. Then the monomorphism of perverse sheaves $$IC(\CN,L_{\underline{t}})\subset Rj_*L_{\underline{t}}[n]$$ corresponds  to the monomorphism of $\sD$-modules generated by applying the operators in $\sD$ in an obvious way to symbols as follows:
$$\sD\cdot \prod_{i=1}^rf_i^{\al_i+k}\subset\sD\cdot \prod_{i=1}^rf_i^{\al_i-k}=\cO[\prod_if_i^{-1}]\prod_if_i^{\al_i}$$ with $k\in\bN$, $k\gg 0$, where
$\cO$ is the sheaf of regular functions on $\CN$, see \cite{B-ls}.

\subsection{Method and organization.}
In this paper, varieties are complex algebraic varieties, and all cohomology groups are taken with $\bC$ coefficients unless otherwise stated.

For the proofs we occasionally use a few non-elementary facts: the structure and propagation of cohomology jump loci of rank one local systems on complements of hyperplane arrangements \cite{DSY}, the fact that length jump loci are absolute $\bQ$-constructible sets \cite{BGLW}, a bit of mixed Hodge theory \cite[Lemma 2.18]{BFNP} saying that the intermediate extension functor is exact upon certain weight conditions, and as already mentioned, the main result of \cite{PS}.

In Section \ref{secLow} we give a general lower bound for the lengths of certain perverse sheaves. In Section \ref{secGHA} we refine this calculation for the case of hyperplane arrangements and prove Theorems \ref{arr} and \ref{thrmL2}. In Section \ref{secPl} we specialize to dimension three and prove Theorem \ref{dim 3} and Proposition \ref{upper}. In Section \ref{secIC} we prove Theorems \ref{cc} and \ref{PS}.

\subsection{Acknowledgement.} We thank B. Wang for his help with various questions related to this paper. We thank the referees for helping us improve the paper. The first author was partly supported by the grants STRT/13/005 and Methusalem METH/15/026 from KU Leuven, and G0B2115N, G097819N, and G0F4216N from the Research Foundation - Flanders. The second author was supported by the ERCEA 615655 NMST Consolidator Grant and also by the Basque Government through the BERC 2018-2021 program and Gobierno Vasco Grant IT1094-16, by the Spanish Ministry of Science, Innovation and Universities: BCAM Severo Ochoa accreditation SEV-2017-0718.

\section{A lower bound for the length of perverse sheaves}\label{secLow}

The goal of this section is to prove Theorem \ref{lower} which gives a lower bound on the length of certain perverse sheaves related with any hypersurface. In the next sections, we will specialize to hyperplane arrangements.

\subsection{Length of perverse sheaves.}
 Let $X$ be  a complex  algebraic variety.  We denote by $D^b_c(X)$ the derived category of bounded $\bC$-constructible sheaves on $X$, and by $\Perv(X)$ the abelian category of $\bC$-perverse sheaves. 
 $\Perv(X)$ is an artinian and noetherian  category, see \cite[Theorem 4.3.1]{BBD}.   In other words, every perverse sheaf $P$ has a finite length composition series $$ 0=P_{0}\hookrightarrow P_{1} \hookrightarrow \cdots \hookrightarrow P_{m}=P $$
 for which the quotients $P_{i}/P_{i-1}$  are simple, that is, of the form $\IC(\overline{S},L)$.  Here  $S$ is a connected stratum in a Whitney stratification of $X$ with respect to which $P$ is constructible, $L$ is an irreducible local system on $S$, and $$\IC(\overline{S},L):=(i_{\overline{S}})_*(j_{S})_{!*}(L[\dim S])$$ denotes the corresponding intersection chain complex on the closure $\overline{S}$, where $j_{S}:S\ra \overline{S}$ and $i_{\overline{S}}:\overline{S}\ra X$ are the natural inclusions, and $j_{!*}$ is the intermediate extension functor from \cite[D\'efinition 1.4.22]{BBD} which we recall below as well. The simple perverse sheaves $P_i/P_{i-1}$ are called the {\it decomposition factors} of $P$.

Let $G(X)$ be the Grothendieck group of $\Perv(X)$. Then we have that in $G(X)$,
$$ [P]= \sum_{i=1}^m [P_i/P_{i-1}]$$
and $\ell(P)=m$. In particular, $[P]$ and $\ell(P)$ do not depend on of the choice of composition series for $P$.
If $R$ is a sub or quotient perverse sheaf of $P$, then it is clear that $[P]\geq [R]$.


\subsection{Intermediate extension functor.}

Let $j: U \to X$ be a locally closed embedding such that $X= \overline{U}$. Given a perverse sheaf $P$ on $U$, the natural map $j_! P\to Rj_* P$ induces a map on perverse cohomology $^p \cH^{0} (j_! P) \to { }^p \cH^{0} (Rj_* P) $. The {\it intermediate extension} $j_{!*} P$ is defined as the image of this morphism in  $\Perv(X)$. If $U$ is smooth of dimension $n$ and $L$ is a local system on $U$, then $\IC(X,L)=j_{!*}(L[n])$.

The following facts about the intermediate extension functor can be found in \cite{DM}.  The intermediate extension functor behaves well only when it comes to the simple perverse sheaves, that is, it takes a simple perverse sheaf into a simple perverse sheaf. In general, $j_{!*}: \Perv(U) \to \Perv(X)$ is not exact in the following way.  Let $$0 \to P \overset{a}{\to} Q \overset{b}{\to} R\to 0$$ be a short exact sequence in $\Perv(U)$. Then  $j_{!*}$ preserves  injections and projections, but exactness in the middle can fail in general. This implies:

\bp \label{p1.1} Let $j: U \to X$ be a locally closed embedding such that $X=\overline{U}$, where $X$ is irreducible. Consider a short exact sequence of perverse sheaves on $U$: 
\be \label{1.0} 0 \to P \overset{a}{\to} Q \overset{b}{\to} R\to 0
\ee Then we have the following inequality:
\be  \label{1.1}
[j_{!*} Q] \geq [j_{!*} P] + [j_{!*} R], \ee hence
\be  \label{1.2}
\ell(j_{!*} Q) \geq \ell(j_{!*} P) + \ell(j_{!*} R).
\ee
Moreover, $j_{!*}$ is exact to the short exact sequence (\ref{1.0}) if and only if (\ref{1.1}) or (\ref{1.2}) holds as equality.
\ep

\bc \label{c1.1}  Let $j: U \to X$ be a locally closed embedding such that $X=\overline{U}$, where $X$ is irreducible. { Assume that $P\in \Perv(U)$ with  $[P]=\sum_{[Q]} a_{[Q]}(P)\cdot[Q]$ in $G(U)$, with $Q$ simple perverse sheaves. Set $j_{!*} [P]:= \sum_{[Q]} a_{[Q]}(P)\cdot[j_{!*} Q]$ in $G(X)$. Then we have that }
\be  \label{1.3} [j_{!*}P] \geq j_{!*} [P] 
\ee hence \be \label{1.4} \ell(j_{!\ast}P) \geq \ell(P) 
\ee
\ec
\begin{proof}
Assume that $P$ has length $m$. Then the composition series of $P$ gives us $(m-1)$ short exact sequences of perverse sheaves. The claim follows from  (\ref{1.1}), (\ref{1.2}), and the fact that $j_{!*}$ takes simple perverse sheaves into simple perverse sheaves. 
\end{proof}
\bex \label{ex1.1} \cite[Example 2.7.1]{DM} Let $L$ be a rank $2$ local system on the punctured complex line  $\C^{\ast}$ defined by the automorphism $\bigl(\begin{smallmatrix}
1 & 0\\
1 & 1
\end{smallmatrix} \bigr) \in \mathrm{GL}_{2}(\C)$.  
One has a short exact sequence of perverse sheaves:
$$0\to \bC_{\bC^*}[1] \to L[1] \to \bC_{\bC^*}[1]\to 0  ,$$
where $\C_{\C^{\ast}}$ is the rank $1$ constant sheaf on  $\C^{\ast}$. Let $j$ denote the open inclusion from $\C^{\ast} $ to $ \C$. As shown in \cite[Example 2.7.1]{DM}, $j_{!*}$ is not an exact functor for this short exact sequence. 

Note that $[L[1]]=2 \cdot [\C_{\bC^*}[1]]$ and $\ell(L[1])=2$.   It is easy to compute that $[j_{!*}(L[1])]=2\cdot [\C_{\C}[1]]+1 \cdot[\C_0]$, hence $\ell(j_{!*}(L[1]))=3$. Thus strict inequalities can happen in Corollary \ref{c1.1}.
\eex

\subsection{A lower bound.} Let $X$ be an irreducible algebraic variety of dimension $n$, and $Z$ a closed proper subvariety of  $X$, such that the complement $U=X \setminus Z$ is smooth.  Let $j:U\ra X$ be the open embedding. Let $L$ be a local system on $U$. We assume that $Rj_{\ast} (L[n])$  is a perverse sheaf on $X$ (e.g., $j$ is an affine morphism). Then $j_{!*} (L[n])$ is a perverse subsheaf of $Rj_{\ast} (L[n])$. If $L$ is irreducible, then $j_{!*} (L[n])$ is simple, hence one can use the length of  $Rj_{\ast} (L[n])$ to measure the difference  between these two perverse sheaves. The following is probably known, but we could not find a reference:

\bt \label{lower} With the above notations, assume that $Rj_{\ast} (L[n])$  is a perverse sheaf on $X$. Fix a Whitney  stratification $\sG$ of $X$ such that $Rj_{\ast} (L[n])$ is constructible with respect to this stratification. Then: \be \label{1.5}
 [Rj_* L[n]] \geq  \sum _{S\in \sG}  [\IC(\overline{S}, \K_{S})] 
\ee hence
\be \label{1.6}
 \ell(Rj_* L[n]) \geq  \sum _{S\in \sG}  \ell  (\IC(\overline{S}, \K_{S})), 
\ee where $\K_{S}=(R^{s}j_{\ast}L)\vert _{S}$ is a local system on $S$, and  $s$ is the codimension of $S$.
\et

\begin{proof}
Let $U_{i}$ denote the union of all the strata in $\sG$ of codimension at most $i$. Then we have the following sequence of maps: 
$$ U=U_{0} \overset{j_{1} }{\hookrightarrow} U_{1} \overset{j_{2} }{\hookrightarrow} \cdots    \overset{j_{n} }{\hookrightarrow} U_{n}=X.$$
Set $$j_{i+m,\cdots,i}= j_{i+m}\circ j_{i+m-1}\cdots \circ j_{i}:U_{i-1}\hookrightarrow U_{i+m}.$$ In particular, $j_{n,\cdots,1} =j$. The set $U_{i}$ is open in $X$, so  $(j_{n,\cdots,i+1})^{-1}$ is a perverse $t$-exact functor, see \cite[Theorem 5.2.4(iv)]{D2}. Hence 
\be\label{eqMM}
R(j_{i,\cdots,1})_{\ast} (L[n])= (j_{n,\cdots,i+1})^{-1} Rj_{*}L[n]
\ee is also perverse for any $1\leq i\leq n$.  Thus
$(j_{i})_{!\ast}  R(j_{i-1,\cdots,1})_{\ast}( L[n])$ is a perverse subsheaf of $R(j_{i,\cdots,1})_{\ast} L[n]$. We have  $n$ short exact sequences of perverse sheaves: 
\begin{gather}
0 \to (j_{1})_{!\ast} (L[n]) \to   R(j_{1})_{\ast} L[n] \to Q_{1} \to 0   \notag \\
\vdots  \nonumber \\
 0 \to (j_{i})_{!\ast}  R(j_{i-1,\cdots,1})_{\ast} (L[n]) \to   R(j_{i,\cdots,1})_{\ast} L[n] \to Q_{i} \to 0  \label{si} \\
\vdots \nonumber \\
0 \to (j_{n})_{!\ast}  R(j_{n-1,\cdots,1})_{\ast} (L[n]) \to   R(j_{n,\cdots,1})_{\ast} L[n] \to Q_{n} \to 0   \notag 
\end{gather} where $Q_i$ denotes the corresponding quotient perverse sheaf on $U_{i}$. It is clear that  $Q_{i}\vert_{U_{i-1}}=0.$

 Recall Deligne's construction (\cite[Proposition 2.1.11]{BBD}), for any $1\leq i \leq n$: 
$$ (j_{i})_{!\ast}  R(j_{i-1,\cdots,1})_{\ast}( L[n]) = \tau_{\leq  i-1-n} R(j_{i,\cdots,1})_{\ast} (L[n]) ,$$ where the truncation functor $\tau$ is with respect to the standard $t$-structure.
 Since the complex $R(j_{i,\cdots,1})_{\ast}( L[n])$ is a perverse sheaf on $U_{i}$, one has $\tau_{> i-n} R(j_{i,\cdots,1})_{\ast} (L[n]) =0$. Therefore we have the following quasi-isomorphism in $D^{b}_{c}(U_{i})$:
\begin{align*}
 Q_{i} & =\tau_{\geq  i-n} R(j_{i,\cdots,1})_{\ast} (L[n])  =  R^{i-n}(j_{i,\cdots,1})_{\ast} (L[n]) [n-i] \\
        & = (R^{i}(j_{i,\cdots,1})_{\ast} L)[n-i]  = (( R^{i}j_{\ast} L)\vert_{ U_{i}})[n-i].
\end{align*}  Thus there is a quasi-isomorphism  
$$Q_{i} =\bigoplus_{S} \K_{S}[n-i],$$  where the direct sum is over all the strata $S$ in $\sG$ of codimension $i$.
In particular,   \[ \begin{aligned}
[ Rj_{\ast} L[n] ]
& = [(j_{n})_{!\ast}  R(j_{n-1,\cdots,1})_{\ast} (L[n])]+ [Q_{n}] \\
& \geq [(j_{n,n-1})_{!\ast} ( R(j_{n-2,\cdots,1})_{\ast} (L[n])]+ [(j_{n})_{!*} Q_{n-1}] +[Q_{n}]  \\
& \geq   \ldots \geq [(j_{!\ast}(L[n])]+\sum _{i=1}^{n}  [(j_{n,\cdots,i+1})_{!*} Q_{i}]   \\
& =\sum _{S\in \sG}  [\IC(\overline{S}, \K_{S})] 
\end{aligned} \]
where the inequalities follow from (\ref{1.1}) and applied to (\ref{si}) repeatedly.  
 \end{proof}

\br\label{rmkNT} (a) When $n=1$,  (\ref{1.5}) and (\ref{1.6}) are equalities as it can be seen from the proof, since (\ref{si}) consists of only one exact sequence in this case. For example, in the case of Example \ref{ex1.1} one has $\ell(Rj_*L[1])=4$.

For $n>1$, the following are equivalent: (\ref{1.5}) is an equality; (\ref{1.6}) is an equality; $(j_{n,\cdots,i+1})_{!*}$ is exact on the short exact sequences (\ref{si}) for any $1\leq i\leq n-1$.

(b) $Rj_* (L[n])=j_{!*} (L[n])$ if and only if $\K_{S}=0$ for any stratum $S\subset Z$. 
  
(c)  When $X$ is smooth and $Z$ is a hypersurface, detecting the simpleness of $Rj_*(L[n])$ for rank one local systems on $U$ was studied in \cite{BLSW}.

(d) If $X=(\CN,0)$ is the germ of the origin in $\CN$, $n\geq 3$,  $Z$ is a hypersurface with only isolated singularities at the origin, and $L$ is a rank one local system on $U=X-Z$, the above proof also works for this analytic case. In particular, we claim that (\ref{1.5}) is an equality in this case. Here we take the stratification of $X$ with three strata $U$, $Z-\{ 0\}$ and $\{0\}$. Note that  $\pi_1(U)\cong \Z$ due to the Milnor fibration and the well-known fact the Milnor fiber is $(n-2)$-connected for isolated singularity case. If $L$ is not the constant sheaf, the above proof proves the claim. On the other hand, if $ L$ is the constant sheaf, a direct computation can be used to show $(j_n)_{!*}$ is exact on the short exact sequence (\ref{si}) in this case, hence the claim follows.   Moreover, once (\ref{1.5}) holds as equality, one can give a formula for the characteristic cycles of $\IC(X,L)$ using the approach we give later in Section \ref{secIC}. This formula is not new, it coincides with the one given by  \cite[Theorem 1.1]{NT}. We leave the details out since they are not the focus of this paper.
\er

\section{General  results for hyperplane arrangements}\label{secGHA}

We specialize now to hyperplane arrangements and prove Theorems  \ref{arr} and \ref{thrmL2}.

\subsection{Terminology.}\label{subTerm}
Let $\sA=\lbrace H_{1},\cdots,H_{r} \rbrace$ be an arrangement of mutually distinct hyperplanes in $\CN$, with complement $$j:U=\CN \setminus \sA\hookrightarrow \bC^n.$$ 
In addition to the terminology specific to hyperplane arrangements already introduced in \ref{subGA}, we will also need the following standard definitions. We say that $\sA$ is {\it decomposable} if there exist nonempty disjoint subarrangements $\sA_1$ and $\sA_2$ with
$\sA = \sA_1\cup\sA_2$ and, after a linear coordinate change, the defining equations for
$\sA_1$ and $\sA_2$ have no common variables. If $\sA$ is central, this is equivalent to $\chi (\mathbb{P}(U)) \neq 0$, where $\pp(U)$ is the complement of the projectivization of the arrangement; hence this is a combinatorial notion, see \cite{STV}.

Let $M_B(U)$ be the moduli space of rank one local systems on $U$.  Then  $$M_B(U)= \mathrm{Hom}(H_{1}(U, \Z),\C^{\ast})=(\C^{\ast})^{r}$$ as algebraic group. The last canonical isomorphism identifies a local system $L$ with ${\underline{t}}=(t_{1},\cdots,t_{r}) \in (\C^{\ast})^{r}$, where $t_i$ is its monodromy around the hyperplane $H_i$. In the rest of this paper, $L=L_{\underline{t}}$ will always denote a rank one local system on $U$ and ${\underline{t}}$ is omitted if the context is clear.

\bd  \label{main def} {Let $\sA$ be an essential hyperplane arrangement.} To a rank one local system $L=L_{\underline{t}}$ on $U$ and a fixed edge $W\in E(\sA)$, we associated two tuples $(\sA_W,U_W,L_W)$ and $(\sA^W,U^W, L^W)$ as follows. Firstly, after renumbering the hyperplanes, let $H_1,\ldots,H_k$ denote all the hyperplanes in $\sA$ containing $W$.

\begin{itemize}

\item Define $\bC^n/W$ the quotient vector space obtained by moving the origin in  $W$ via a change of coordinates.   Then $\CN =\CN/W \times W$ as affine spaces.

\item Define the arrangement $ \sA_{W} =\lbrace H_{1}/W, \ldots, H_{k}/W \rbrace$ in $\CN/W$. In particular, $\sA_W$ is always a central essential arrangement. Denote its  complement by $U_{W}= (\CN/W)\setminus \sA_{W}$. 

\item  Define the arrangement $ \sA^{W}=\lbrace W\cap H_{k+1},\ldots ,W\cap H_r\rbrace$ in $W$. { It may happen that $ W\cap H_{i}=\emptyset$ or $W\cap H_{i}=W\cap H_{j}$ for $i,j>k$ and $i\neq j$. The essential assumption for $\sA$ implies that $\sA^W$ is non-trivial, if $\dim W \geq 1$.}  Its complement is $U^{W}= W- \sA^{W}$. 
If $W$ is a point, then $\sA^{W}=\emptyset$.

\item For any $x\in U^{W}$, let $U_x$ be the complement of $\sA$ in a small ball in $\bC^n$ centered at $x$. Then  $U_{x}$ is homotopy equivalent to $U_{W}$. Restricting $L$ to $U_x$, one has  a corresponding rank one local system on $U_W$, which we denote by $L_W$. In terms of monodromy $t\in(\bC^*)^r$,  $L_W$ is the projection of $t$ onto the first $k$ coordinates in $M_B(U_W)=(\bC^*)^k$.

\item
We define a rank one local system $L^{W}$ on $U^{W}$ by setting the monodromy around an edge $D$, which has to be an hyperplane in the arrangement $\sA^W$, to be the non-zero complex number $\prod_{j=1}^{l} t_{i_{j}}$, where $H_{i_j}$ are all the hyperplanes  in $\sA$ containing $D$ but not $W$, that is,
$D=W\cap H_{i_{1}}\cap \cdots \cap H_{i_{l}}$ with $i_j>k$. 
\end{itemize}
If $W=\CN$, then $(\sA_W,U_W,L_W)=(\emptyset, \{0\}, \bC_{\{0\}})$ and $(\sA^W,U^W, L^W)=(\sA,U,L)$.

An edge $W\in E(\sA)$ is {\it dense} if the central subarrangement $\sA_{W}$ is {indecomposable} and, by convention, $W\neq\bC^n$. For the definition of indecomposable arrangement, see \cite[Definition 2.5]{D3}.
 In particular, density is a combinatorial notion.
\ed

\subsection{The lower bound.} For the proof of Theorem \ref{arr}, we first show:

\bp \label{pushfoward} Let $L$ be a rank one local system on the complement $U$ of an essential hyperplane arrangement $\sA$ in $\CN$. For any edge $W\in E(\sA)$ and $i$ an integer, 
\be \label{2.1}  (R^i j_{\ast} L)\vert_{U^{W}} =\oplus L^{W},
\ee  where the number of copies of $L^W$ is $\dim H^i(U_W,L_W)=\dim H^i(U_{x}, L)$ for any $x\in U^{W}$.
\ep 

\begin{proof}
If $W$ is a point, then $U^{W}=W$ is also a point, hence  $L^W$ is the skyscraper sheaf  and the claim is clear. If $W=\CN$ the claim is also clear. 

Now we assume that $1\leq \dim W <n$. 
We can assume $0\in W$, $W=\bigcap_{i=1}^{k} H_{i}$, and $W\not\subset H_i$ if $i>k$.

Consider the following trivial fibrations:
$$\xymatrix{
 B_{\varepsilon}\cap U_{W}   \ar[rd]   &  U_{W} \ar[rd]     &  U_{W} \ar[rd] & \\
                             &   Y  \ar[d] \ar[r]              & U_{W}\times U^{W} \ar[r]  \ar[d] & \CN\setminus  \bigcup_{i=1}^{k} H_{i} \ar[d]\\
                             &   T( U^{W}) \ar[r] &  U^{W} \ar[r] &ã W
}$$
where: the third fibration is the restriction of the projection map $\CN=\CN/W\times W\ra W$, the second one is the pull back of the third one, and the first one we explain now. We shrink both the fibre and the base space in the second fibration  such that  the resulting total space $Y$ is homotopy equivalent to $U^{W} \times U_{W}$ and $Y \subset U$. Set $\codim W=w$. View $U^{W}$ as the complement of a hyperplane arrangement in $\mathbb{P}^{n-w}.$ As shown by Durfee  \cite{Dur}, there exists an isotopic neighbourhood of $U^{W}$, denoted by $T(U^{W})$, such that $T(U^{W}) \subset U^{W}$ is a homotopy equivalence. See \cite[page 149]{D1} for the concrete construction. Moreover, for this construction, $$ \lbrace \Vert x-y \Vert \mid x\in T(U^{W}), y\in H_{i}, k+1\leq i \leq r   \rbrace$$
has a positive lower bound, say $\alpha$. As long as we take $0<\varepsilon<\alpha$,  $B_{x,\varepsilon} \setminus U^{W} \subset U $ for any $x\in  T(U^{W})$, where $B_{x,\varepsilon}$ is an open ball in $\CN$ centred at $x$ with radius $\epsilon$.   Recall that $\sA_W$ is a central arrangement. 
Let $B_\epsilon$ denote the open ball in $\CN/W$ with radius $\epsilon$ centered at the origin. 
Then $B_\epsilon \cap U_W$ is homotopy equivalent to $U_W$. 
Taking $Y= T(U^W)\times(B_{\varepsilon}\cap U_{W}) $, it is clear that $Y$ is homotopy equivalent to $U^{W} \times U_{W}$  and $Y\subset U$.

Consider the following composition of natural maps:
\begin{center}
$\pi_{1}(U_{W})\times \pi_{1}( U^{W})= \pi_{1}(U_{W}\times U^{W})=\pi_{1}(Y) \to \pi_{1}(U) \to (\C^{\ast})^{r} $
\end{center}
where the last map is the character map  defining the local system $L$. 
Using the pull back functor, we have two rank one local system $L_{W}$ on $U_{W}$ and  $L^{W}$ on $U^{W}$, respectively, as we defined before. 
Then $$L\vert _{Y} = (L_{W}\otimes_{\bC} L^{W}) \vert_{Y}, $$
where $L_{W}\otimes_{\bC} L^{W}$ means $(p_{1}^{-1} L_{W}) \otimes_{\bC} (p_{2}^{-1}L^{W})$ with $p_{i}$ the  projection maps from $U_{W}\times U^{W}$.

 Consider the following commutative diagram of inclusions:
\begin{center}
$\xymatrix{
& Y \ar[r]    \ar[d]^{j^{\prime}}         & U \ar[d]^{j}  \\
T(U^{W}) \ar[r] & T(U^{W})\times B_{\varepsilon}         \ar[r]      & \CN
}$
\end{center}
Here $T(U^W)$ is viewed as $T(U^W)\times 0$, where $0$ is the center of the ball $B_\epsilon$.
Then \begin{center}
$ (Rj_{\ast}L) \vert_{T(U^{W})}=(Rj^{\prime}_{\ast} (L\vert_{Y})) \vert_{T(U^{W})}=(Rj^{\prime}_{\ast} ((L_{W}\otimes_{\bC} L^{W}) \vert_{Y}) ) \vert_{T(U^{W})},$
\end{center}
where the first quasi-isomorphism follows from the fact that $Y$ is open in $U$.
Due to the product structure and the K\"{u}nneth formula, we have that $(R^{i}j_{\ast}L) \vert_{T(U^{W})}= \oplus L^{W}\vert_{T(U^W)}$, where the number of copies is $\dim H^{i}(U_{W}, L_{W})$. Note that $(R^{i}j_{\ast}L) \vert_{U^{W}}$ is a local system. Now $T(U^W)$ is open in $U^W$ and $T(U^W) $ is homotopy equivalent to $U^W$. So $(R^{i}j_{\ast}L) \vert_{U^{W}}=\oplus L^{W}$, where the number of copies is $\dim H^{i}(U_{W}, L_{W})$.
\end{proof}

\begin{proof}[{\bf Proof of Theorem \ref{arr}}]
The claim follows from Proposition \ref{pushfoward} and Theorem \ref{lower} directly.
\end{proof}



\br  Bibby (\cite[Lemma 3.1]{Bib}) proved a similar claim as in Proposition \ref{pushfoward}  in a more general set-up, but only for the constant sheaf case.  
\er

\br\label{remQ1}
If $W$ is one of the hyperplanes in $\sA$, then $U_W$ is homotopy equivalent with a circle, hence the following are equivalent:  (i) the monodromy of $L$ around $W$ is trivial; (ii) $L_W$ is trivial; (iii) $H^1(U_W,L_W)\neq 0$. By Proposition \ref{pushfoward}, this is then further equivalent to: (iv) $\sL_W\neq 0$,  in the notation of Theorem \ref{lower}.
\er

\begin{que} \label{question 3.6}  Based on Theorem \ref{arr}, one may also ask if every decomposition factor of $Rj_*L[n]$ is always an intermediate extension of local systems of rank precisely one.
\end{que}

\subsection{Length two.} For the proof of Theorem \ref{thrmL2}, we will need some facts. { The {\it cohomology jump loci of $U$} are defined by 
\be \label{jump loci} \sV^{i}_j(U) :=  \{L \in M_B(U) \mid   \dim H^i(U,  L) \geq j\}.
\ee
The cohomology jump loci are homotopy invariants of $U$ and Zariski closed sub-varieties in $M_B(U)=(\bC^*)^r$.}
In this article we do not consider the possibly non-reduced scheme structure on these sets.
Set  $\V^{i}(U)=\V_{1}^{i}(U)$ and  
$\V(U)= \cup_{i\geq 0} \V^{i}_1(U).$ 

If $\sA$ is central indecomposable then $\V(U)=\{\prod_{i=1}^rt_i=1\}$, see \cite[Proposition 3.25]{B-ls}.
If $\sA$ is central essential then $\sV(U)=\sV^n(U)$, by  \cite[Theorem 1.3]{DSY}; we call this the {\it propagation property}.  Hence if $W$ is a dense edge,
$$\V(U_{W})=\{\prod_{W\subset H_i}t_i=1\}\subset M_B(U_W)=(\bC^*)^{\#\{i\mid W\subset H_i\}}.$$ Setting
$$
\W(W)=\{L\in M_B(U)\mid L_W\in \V(U_W)\},
$$ one has $\W(\bC^n)=M_B(U)$, and, if $W$ is dense, $\W(W)=\{\prod_{W\subset H_i}t_i=1\}\subset (\bC^*)^r$.
Set $$\W(W)^{\circ}=\W(W)\cap\left ((\bC^*)^r\setminus \bigcup_{W^{\prime} \neq W,\bC^n} \W(W^{\prime})\right)$$
where the  union is over all the edges $W'$ different from $W$ and $\bC^n$. 

\bl\label{lemWo} Let $W$ be an edge. Then
$$\W(W)^{\circ}=\W(W)\cap \left ((\bC^*)^r\setminus \bigcup_{W^{\prime} \neq W} \W(W^{\prime})\right)$$
where the  union is over all the dense edges $W'$ different from $W$. In particular,  if $W$ is dense, then
$$
\W(W)^{\circ}=\left\{\prod_{W\subset H_i}t_i=1\right\}\cap \left\{\prod_{\text{dense } W'\ne W}(\prod_{W'\subset H_i}t_i-1)\neq 0 \right \}\subset(\bC^*)^r,
$$
and
$$
\W(\bC^n)^\circ=\left\{\prod_{\text{dense } W'}(\prod_{W'\subset H_i}t_i-1)\neq 0 \right \}\subset(\bC^*)^r.
$$ 
\el
\begin{proof} If $W'\neq\bC^n$ is not a dense edge, then there exists a finite collection of dense edges $\lbrace W_{i} \rbrace_{1\leq i \leq l}$ such that $U_{W'}=U_{W_{1}} \times \cdots \times U_{W_{l}}$. Then, by K\"unneth formula,  $\W(W')=\bigcap_{i=1}^{l} \W(W_{i}).$ This implies the first claim. The rest follows from the explicit formulas for dense edges given above. 
\end{proof}

\begin{lemma}\label{lemNonRes}
If $W$ is dense and $L\in \W(W)^\circ$, then $(\sA^W, U^W,L^W)$  satisfies the equivalent conditions of Theorem \ref{BLSW}.
\end{lemma}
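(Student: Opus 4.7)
The plan is to verify condition (c) of Theorem \ref{BLSW} for the triple $(\sA^W,U^W,L^W)$. Unravelling (c) together with the definition of the monodromy of $L^W$ around a hyperplane of $\sA^W$, this reduces to showing that for every dense edge $V$ of $\sA^W$ in $W$,
$$
P_V \;:=\; \prod_{\substack{H_i\in\sA\\ V\subset H_i,\ H_i\not\supset W}} t_i \;\neq\; 1.
$$
Since $W$ is dense and $L\in\W(W)$, Lemma \ref{lemWo} gives $\prod_{H_i\supset W}t_i=1$, and therefore $P_V=\prod_{V\subset H_i}t_i$.

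The organising step is to introduce the unique indecomposable decomposition $\sA_V=\bigsqcup_{j=1}^k\sA_V^{(j)}$ of the central arrangement $\sA_V$ in $\bC^n/V$, corresponding to $\bC^n/V=V_1\oplus\cdots\oplus V_k$. A short matroidal verification shows that $W_j:=\bigcap_{H\in\sA_V^{(j)}}H$ is a dense edge of $\sA$ with $\sA_{W_j}=\sA_V^{(j)}$, and that the set $\{i:V\subset H_i\}$ is partitioned as $\bigsqcup_j\{i:W_j\subset H_i\}$. Consequently
$$
P_V \;=\; \prod_{j=1}^{k}\ \prod_{W_j\subset H_i} t_i,
$$
and by $L\in\W(W)^{\circ}$ and Lemma \ref{lemWo}, each factor equals $1$ when $W_j=W$ and is different from $1$ when $W_j$ is a dense edge distinct from $W$. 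The proof then reduces to the combinatorial claim (CC): $k\leq 2$, and if $k=2$ then exactly one of $W_1,W_2$ equals $W$. Granting (CC), when $k=1$ one has $V=W_1$ dense in $\sA$ with $V\neq W$ (because $V\subsetneq W$ as a proper edge of $\sA^W$), so $P_V$ is a single factor $\neq 1$; when $k=2$, one factor is $1$ and the other is $\neq 1$, so $P_V\neq 1$.

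To prove (CC), I would observe that $W/V$ is an edge of $\sA_V$, so it splits compatibly with the direct sum as $W/V=W_1^{*}\oplus\cdots\oplus W_k^{*}$ with $W_j^{*}\subset V_j$. After identifying $(\sA^W)_V=(\sA_V)^{W/V}$, its indecomposability forces $(\sA_V)^{W/V}$ to concentrate in a single summand, i.e., $W_j^{*}\subset C_j$ (the center of $\sA_V^{(j)}$) for every $j$ except one index $j_0$. On the other hand $\bC^n/W\cong\bigoplus_l V_l/W_l^{*}$; for $l\neq j_0$ the strict inclusion $C_l\subsetneq V_l$ yields $V_l/W_l^{*}\neq 0$ and $\sA_W^{(l)}=\sA_V^{(l)}$ non-empty, so the indecomposability of $\sA_W$ (equivalent to $W$ being dense) allows at most one such $l$, giving $k\leq 2$. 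When $k=2$, the unique index $l_0\neq j_0$ is such that the $j_0$-summand contributes no hyperplane of $\sA_W$ (otherwise $\sA_W$ would decompose), whence $\{H_i:W\subset H_i\}=\sA_V^{(l_0)}$ and therefore $W=\bigcap_{H_i\supset W}H_i=W_{l_0}$.

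The main obstacle is precisely (CC): one has to carry through the interplay between the indecomposable decomposition of $\sA_V$ and the position of the subspace $W/V$ inside $\bC^n/V$, using density of $W$ in $\sA$ and density of $V$ in $\sA^W$ as two independent constraints that together pin down the two-piece structure and identify $W$ with one of the pieces.
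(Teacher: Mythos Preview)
Your proposal is correct and follows essentially the same route as the paper. Both arguments verify condition (c) of Theorem \ref{BLSW} for $(\sA^W,U^W,L^W)$ by fixing a dense edge of $\sA^W$ (your $V$, the paper's $D$), passing to the indecomposable decomposition of $\sA_V$ into factors indexed by dense edges $W_j$ of $\sA$, and showing that at most two factors occur, with $W$ equal to one of them when there are two; the paper phrases this as a dichotomy ``$D$ dense in $\sA$'' versus ``$D$ not dense in $\sA$'' and is considerably more terse on the combinatorial step you call (CC), whereas you make the two constraints (density of $W$ in $\sA$ and density of $V$ in $\sA^W$) explicit via the splitting of $W/V$ and $\bC^n/W$ along the decomposition $\bC^n/V=\bigoplus V_j$.
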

\begin{proof}
 Fix a dense edge $D\in E(\sA^{W})$. Let $\{ H_{i_1}, \cdots, H_{i_l} \}$ be all the hyperplanes in $\sA$ containing $D$ but not $W$, hence $D=W\cap H_{i_{1}}\cap \cdots \cap H_{i_{l}}$ with $W\nsubseteq H_{i_j} $ for any $1\leq j\leq l$. We only need to show that $L\in \W(W)^\circ $ implies that $\prod_{j=1}^l t_{i_j} \neq 1$. 
 
  There are two possible cases. In the first case, $D$ is also a dense edge in $E(\sA)$. Then $L\in \W(W)^\circ $ implies, by Lemma \ref{lemWo},  that $\prod_{W\subset H_i}t_i=1 $ and $\big(\prod_{W\subset H_i}t_i \big)\big(\prod_{j=1}^l t_{i_j}\big) \neq 1,$ hence the claim follows.
  
In the second case, $D$ is not a dense edge in $E(\sA)$. Then there exists a finite collection of dense edges $\lbrace W_{j} \rbrace_{1\leq j \leq s}$ of $\sA$ with $s\geq 2$ such that $U_{D}=U_{W_{1}} \times \cdots \times U_{W_{s}}$. In particular, $D=\bigcap_{j=1}^s W_j$.
  Since $W$ is a dense edge in $\sA$, it follows that $W$ is also a dense edge for exactly one of these arrangements $\sA^{W_j}$, say $\sA^{W_1}$. In particular, $W_1\subset W$. Since $D$ is a dense edge in $E(\sA^W)$,   it follows that $W=W_1$ and $s=2$. 
  Then $W_2= H_{i_{1}}\cap \cdots \cap H_{i_{l}}$, hence $\prod_{j=1}^l t_{i_j}= \prod_{W_2\subset H_i}t_i\neq 1$.
\end{proof}

\bp\label{propLChi} 
Fix a dense edge $W\in E(\sA)$. For any $L \in \W(W)^{\circ}$, (\ref{length}) holds as equality. Moreover, \be 
\ell(Rj_{\ast} L[n])=1+\vert \chi(\pp(U_{W})) \vert.
\ee
\ep

\begin{proof}
If $L \in \W(W)^{\circ}$, then, with notation as in the proof of Theorem \ref{lower}, one has that $Q_{i}=0$ for any $i\neq w$,  and $Q_{w}= \oplus L^{W}[n-w]$ with $\dim H^{w}(U_{W},L_W)$ copies, according to Theorem \ref{arr} and the proof of Theorem \ref{lower}. With the same notation as in the latter, consider the inclusions: 
$$ U=U_{0} \hooklongrightarrow{j_{w,\cdots,1}} U_{w}  \hooklongrightarrow{j_{n,\cdots,w+1} } U_{n}=\CN.$$
We have that $$  R(j_{w-1,\cdots,1})_{\ast} (L[n])=(j_{w-1,\cdots,1})_{!\ast}  (L[n]).$$ Hence (\ref{si}) gives a short exact sequence of perverse sheaves
\be  \label{sw}
0 \to (j_{w,\cdots,1})_{!*} (L[n]) \to   R(j_{w,\cdots,1})_{\ast} (L[n]) \to  \oplus L^{W}[n-w]\to 0 
\ee
since $(j_{w,\cdots,1})_{!*}(L[n])=(j_w)_{!*}(j_{w-1,\cdots,1})_{!*}(L[n])$. Then (\ref{length}) holds as equality if we can show that $(j_{n,\cdots,w+1})_{!*}$ applied to this exact sequence is exact. If $w=n$, it is trivial. 

Assume $w<n$. Letting $i=w+1$ in (\ref{si}), since $Q_{w+1}=0$ one obtains that
$$R(j_{w+1,\ldots, 1})_*(L[n])= R(j_{w+1})_{*} R(j_{w,\cdots,1})_{\ast} (L[n])=$$
$$=(j_{w+1})_{!} R(j_{w,\cdots,1})_{\ast} (L[n])=(j_{w+1})_{!*} R(j_{w,\cdots,1})_{\ast} (L[n]).$$
Repeating this successively until $i=n$, one has
$$Rj_*(L[n])= R(j_{n,\cdots,w+1})_{*} R(j_{w,\cdots,1})_{\ast} (L[n])=$$
$$=(j_{n,\cdots,w+1})_{!} R(j_{w,\cdots,1})_{\ast} (L[n]) =(j_{n,\cdots,w+1})_{!*} R(j_{w,\cdots,1})_{\ast} (L[n]).$$
On the other hand,  $L \in \W(W)^{\circ}$ implies by Lemma \ref{lemNonRes} that  $$R(j_{n,\cdots,w+1})_{*}  (L^{W}[n-w])=(j_{n,\cdots,w+1})_{!}  (L^{W}[n-w])=(j_{n,\cdots,w+1})_{!*}  (L^{W}[n-w]).$$
Thus, $(j_{n,\cdots,w+1})_{!*}$ applied to the exact sequence (\ref{sw}) produces another exact sequence.



To prove the equality $\ell(Rj_{\ast} (L[n]))=1+\vert \chi(\pp(U^{W}))\vert$, we only need to show that $\dim H^{w}(U_{W}, L_W)=\vert \chi(\pp(U_{W}))\vert$. Note that  $\sA_W $ can be viewed as a central arrangement, hence $U_W= \pp(U_{W}) \times  \C^*$ (see \cite[Proposition 6.4.1]{D2}). For any $L \in \W(W),$ there  exists a unique rank one local system $\widetilde{L}_W$ on $\pp(U_{W})$ such that $p^{\ast}\widetilde{L}_W=L_W$, where $p: U_{W} \to \pp(U_{W})$ is the Hopf map (same as the first projection map) (see \cite[Proposition 6.4.3]{D2}). 
If $L \in \W(W)^{\circ}$, Lemma \ref{lemNonRes} implies that $\widetilde{L} _W$ on $\pp(U_W)$ satisfies the assumption in \cite[Theorem 6.4.18]{D2} from where it follows  
 that   \begin{center}
 $\dim H^{i}(\pp(U_{W}),\widetilde{L}_W )= \begin{cases}
\vert  \chi(\pp(U_{W}))\vert & \text{ if } i=w-1, \\
0 & \text{ if }i\neq w-1.
\end{cases}$
\end{center}
Then the K\"{u}nneth formula gives us that \begin{center}$\dim H^{i}(U_{W}, L_W )= \begin{cases}
\vert  \chi(\pp(U_{W}))\vert &\text{ if }  i=w-1, w, \\
0 & \text{otherwise.}
\end{cases}$\end{center}
This finishes the proof.
\end{proof}

\begin{proof}[{\bf Proof of Theorem \ref{thrmL2}}]

Clearly (b) implies (a). 

Assume that $\ell(Rj_{\ast} (L[n]))=2$. We know that $j_{!*}(L[n])$ is a  perverse subsheaf of length one. Hence, by Theorem \ref{arr}, there are two possibilities.

The first is when strict inequality happens in Theorem \ref{arr}. That is, $H^w(U_W,L_W)=0$ for all $W\neq\bC^n$. This is equivalent to $(j_{n,\ldots, i})_{!*}Q_i=0$ for all $i>0$, with notation as in the proof of Theorem \ref{lower}. Hence $Q_i=0$ for $i>0$, and $Rj_{\ast} (L[n])=j_{!*}(L[n])$. However, this has length one, which is a contradiction. So in fact this case does not occur.

The second case is when equality happens in Theorem \ref{arr}. That is,  there exists only one edge $W\neq \CN$ such that $ H^w(U_W,L_W)\neq 0$. Here $w=\codim W$. 

Firstly, $W$ has to be a dense edge. If not, there exists a finite collection of dense edges $\lbrace W_{i} \rbrace_{1\leq i \leq l}$ ($l\geq 2$) such that $U_{W}=U_{W_{1}} \times \cdots \times U_{W_{l}}$. By the K\"unneth formula, $$\dim H^w(U_W,L_W)=\prod_{i=1}^l \dim H^{w_i} (U_{W_i},L_{W_i} )\neq 0.$$ This contradicts the fact that  $ H^{w'} (U_{W'},L_{W'})=0$ for all edges $W'\neq W, \CN$, where $w'=\codim W'$. In particular, (a) implies (b).

Secondly, by the propagation property, $\sV^{w'}(U_{W'})=\sV(U_{W'})$ for all edges $W'\neq W, \CN$. Thus $L \notin \W(W')$, and hence $L\in \W(W)^\circ$.  Then the rest of the claim that (a) is equivalent to (c) follows now from  Lemma \ref{lemWo} and Proposition \ref{propLChi}.
\end{proof}


\section{Length in the plane arrangements case}\label{secPl}

We specialize now to dimension three and prove Theorem \ref{dim 3} and Proposition \ref{upper}. We keep the same notation as in the previous section, except $n=3$ now.

\subsection{Proof of Theorem \ref{dim 3}.}
\bd A subset $Z$ of the space of rank one local systems $M_B(U)$ is called {\it absolute $\bQ$-constructible subset}, if $Z$ is a Zariski constructible subset defined over $\bQ$ which is obtained from finitely many torsion-translated complex affine algebraic subtori of $M_B(U)$ via a finite sequence of taking union, intersection, and complement.
\ed 
\begin{proof}[Proof of Theorem \ref{dim 3}]
 Consider the ``good" subset of $M_B(U)$ consisting of local systems $L$ satisfying equality in (\ref{length}),
\be\label{eqGood}
\{L\in M_B(U)\mid \ell(Rj_*L[3])=\sum_W \dim H^w(U_W,L_W)\}.
\ee
By \cite{BW}, the cohomology loci 
$$
\{K\in M_B(U_W)\mid \dim H^w(U_W,K)=k_W\}
$$
of rank one local systems on $U_W$ are absolute $\bQ$-constructible subsets for any  $k_W\in\bN$. By \cite[Corollary 1.3]{BGLW}, the length loci
$$
\{L\in M_B(U)\mid \ell(Rj_*L[3])=k\}
$$
are also absolute $\bQ$-constructible for any $k\in\bN$. Moreover, the map $M_B(U)\ra M_B(U_W)$, $L\mapsto L_W$, is just a projection, hence the inverse image of a torsion-translated subtorus is also a torsion-translated subtorus. Thus this map pulls back absolute $\bQ$-constructible sets to absolute $\bQ$-constructible sets. By varying $k$ and the $k_W$ such that $k=\sum_W k_W$, it follows that the good set from (\ref{eqGood}) is absolute $\bQ$-constructible.

This reduces the proof of Theorem \ref{dim 3} to the torsion local system case. Indeed, if (\ref{length}) holds as equality for any torsion local system with $t_i\neq 1$ for all $i$, then smallest  absolute $\bQ$-constructible set containing all these torsion local systems is exactly the complement of $\bigcup_{i=1}^r\{ t_i=1\} $ in $M_B(U)$. 

Let $L_{\underline{t}}$ be a torsion rank one local system on $U$, that is, all coordinates of ${\underline{t}}\in(\bC^*)^r$ are roots of unity. 
Assume that all $t_i\neq 1$. Then $L_{\underline{t}}$ is trivialized on a finite Galois \'etale cover of $U$, hence $L_{\underline{t}}[3]$ is the complexification of a direct summand of a shifted higher rank local system underlying a pure Hodge module of weight 3, see \cite[Th\'eor\`eme 1.1, Lemme 3]{Sa2}. We shall consider the smallest such Hodge module. Namely, let $O_{\underline{t}}$ be the orbit of ${\underline{t}}$ in $(\bC^*)^r$ under the diagonal action of the Galois group $Aut(\bC/\bQ)$. Then for every $\underline{\al}\in O_{\underline{t}}$, $\al_i$ is a primitive root of unity of same order as $t_i$ for every $i$. Hence $O_{\underline{t}}$ is finite. The direct sum 
$$
\widehat{L}_\bC=\oplus_{{\underline{\al}}\in O_{\underline{t}}}L_{\underline{\al}}
$$
is a higher-rank local system on $U$, and it is an $Aut(\bC/\bQ)$-invariant $\bC$-point of the moduli space of local systems on $U$. Hence there exists a $\bQ$-local system $\widehat{L}$ such that $\widehat{L}_\bC=\widehat{L}\otimes_\bQ\bC$. { In particular, $\widehat{L}$ is a simple $\bQ$-local system.}  {{Moreover, $\widehat{L}[3]$ underlies a Hodge module of pure weight 3. }} 
{To see this, one can consider the finite cover $U^{\underline{t}}$ of $U$ associated to the following composition of surjective maps $$ \pi_1(U)\to H_1(U,\Z)\cong \Z^r \to \oplus_{i=1}^r \Z/(\ord  t_i) .$$ Here the first map is the Hurewicz morphism,
and the second map sends every factor $\Z$ to the finite group $\Z/(\ord t_i)$, where $\ord t_i$ is the order of $t_i$. The finite cover $U^{\underline{t}}$ can be taken as a algebraic variety such that the covering map $\pi: U^{\underline{t}} \to U$ becomes an algebraic map. Now $\bQ_{U^{\underline{t}}} [3]$ undelies a pure Hodge module of weight 3. Then so does $R\pi_* \bQ_{U^t} [3]$, since $\pi$ is a finite map. Note that  $\widehat{L}[3]$ is a direct summand  of $R\pi_* \bQ_{U^t} [3]$, so   $\widehat{L}[3]$ underlies a pure Hodge module of  weight 3.  In the rest of the proof, we use that all considered functors lift to the category of mixed Hodge modules, \cite[Theorem 0.1]{Sai}.


With the same notation as in the proof of Theorem \ref{lower}, let $U_{i}$ denote the union of all strata with codimension at most $i$, and consider sequence of maps: 
$$ U=U_{0} \overset{j_{1} }{\hookrightarrow} U_{1}  \overset{j_{2} }{\hookrightarrow} U_{2}   \overset{j_{3} }{\hookrightarrow} U_{3}=\bC^3.$$
We construct the following three short exact sequences of $\bQ$-perverse sheaves underlying similar exact sequences of mixed Hodge modules: 
\begin{align} 
0 \to (j_{1})_{!\ast} (\widehat{L}[3]) \to   R(j_{1})_{\ast} (\widehat{L}[3]) \to \widehat{Q}_{1} \to 0   \label{s31} \\
0 \to (j_{2})_{!\ast}  R(j_{1})_{\ast} (\widehat{L}[3]) \to   R(j_2 \circ j_1)_{\ast} (\widehat{L}[3]) \to \widehat{Q}_2 \to 0  \label{s32} \\
0 \to (j_{3})_{!\ast}  R(j_2 \circ j_1)_{\ast} (\widehat{L}[3]) \to   Rj_{\ast} (\widehat{L}[3]) \to \widehat{Q}_3 \to 0   \label{s33} 
\end{align}

Firstly, the explicit description of the $\bQ$-perverse sheaf $\widehat{Q}_i$ is as follows. Consider the $\bC$-perverse sheaf $Q_i$ constructed out of { the rank one $\bC$-local system $L_{\underline{t}}$} as in the proof of Theorem \ref{lower}. The perverse sheaf $Q_i$ is supported only on the disjoint union of the strata $U^W$ where $W$ are edges of codimension $i$. Over each $U^W$, $$(Q_i)_{|U^W}=((L_{\underline{t}}^W)^{\oplus h^i(U_W,(L_{\underline{t}})_W)})[3-i]$$ as shown in Proposition \ref{pushfoward}. Hhere we set $$h^i(U_W,(L_{\underline{t}})_W)): =\dim H^i(U_W,(L_{\underline{t}})_W)) .$$ Doing the same for all $L_{\underline{\al}}$ with $\underline{\al}\in (\bC^*)^r$ in the orbit $O_{\underline{t}}$, we obtain $\bC$-perverse sheaves $Q_{{\underline{\al}},i}$ with
$$
(Q_{{\underline{\al}},i})_{|U^W}=(((L_{\underline{\al}})^W)^{\oplus h^i(U_W,(L_{\underline{\al}})_W)})[3-i].
$$
We define
$$
\widehat{Q_{i,\bC}} = \oplus_{\underline{\al}\in O_{\underline{t}}} Q_{\underline{\al},i}.
$$
It is not immediately clear that this perverse sheaf is defined over $\bQ$. For that we have to note that 
\be\label{eqHal}h^i(U_W,(L_{\underline{\al}})_W)=h^i(U_W,(L_{\underline{t}})_W)\ee
for all $\underline{\al}\in O_{\underline{t}}$. This follows immediately from the fact that cohomology jump loci are defined over $\bQ$, since $(L_{\underline{\al}})_W$ must also be an $Aut(\bC/\bQ)$-conjugate of $L_W$ if $\underline{\al}$ is a conjugate of $\underline{t}$. Thus there exists a $\bQ$-perverse sheaf $\widehat{Q_i}$ with $\widehat{Q_i}\otimes_\bQ\bC=\widehat{Q_{i,\bC}}$. By functoriality, $\widehat{Q_i}=\widehat{Q}_i$ are the perverse sheaves appearing in the exact sequences above.

Now, since $t_i\neq 1$ for all $i$, for every $\underline{\al}\in O_{\underline{t}}$ one also has that $\al_i\neq 1$ for all $i$. Then $\widehat{Q}_1 \otimes_\bQ \C=\widehat{Q_1}\otimes_\bQ\bC=0$ by Remark \ref{remQ1}, and hence $\widehat{Q}_1=0$. 
This implies that $$(j_{1})_{!\ast} (\widehat{L}[3]) =   R(j_{1})_{\ast} (\widehat{L}[3]).$$
Then the claim follows if we can show that the functor $(j_3)_{!*}$ is exact on the short exact sequence (\ref{s32}). 

Now, since the intermediate extension functor preserves the weight,  $(j_{1})_{!\ast} (\widehat{L}[3]) =   R(j_{1})_{\ast} (\widehat{L}[3])$ is also a Hodge module of pure of weight 3. We will show that $\widehat{ Q}_2$ has pure weight 4.

This is  a local computation. Let $W\in E(\sA)$ be  a line. 
Due to Theorem \ref{lower} and Proposition \ref{pushfoward}, 
we get that $\widehat{ Q}_2\vert_{U^W} $ is a $\bQ$-local system on $U^W$ shifted by 1 with stalk isomorphic to $H^2(U_W, \widehat{ L}_W)$. Here $ \widehat{ L}_W$ is the $\bQ$-local system corresponding to $\widehat{L}\vert_{U_x}$ for any $x \in U^W$, due to the fact that $U_x$ is homotopy equivalent to $U_W$.

Assume that $W$ has multiplicity $m$, that is, there are exactly $m$ hyperplanes of $\sA$ containing $W$.  Then $U_W$ is just the central line arrangement with $m$ lines passing through the origin. Without loss of generality, we can assume that $W=H_1\cap \cdots \cap H_m$. 

If $\prod_{i=1}^m t_i \neq 1$, then equation (\ref{eqHal}) gives us that $H^2(U_W,(L_{\underline{\alpha}})_W)=0$ for any $\underline{\al} \in O_{\underline{t}}$, hence $\widehat{ Q}_2 \vert_{U^W}=0$. 

On the other hand, if $\prod_{i=1}^m t_i = 1$, then equation (\ref{eqHal}) implies that $\prod_{i=1}^m \alpha_i = 1$ for any $\underline{\al} \in O_{\underline{t}}$.
Notice that  $$U_W=\bC^* \times \pp(U_W),$$ where $\pp(U_W)=\mathbb{P}^1 - \{ m \text{  points}\}$ and the hyperplanes $\{ H_i \}_{1\leq i \leq m}$ are one-to-one corresponding to these $m$ points. Let $p$ be the projection from $U_W$ to $\pp(U_W)$. 
The product $\prod_{i=1}^r \alpha_i =1 $ implies that there exists a rank one $\C$-local system $\widetilde{L}_{\underline{\alpha},W}$ on $\pp(U_W)$ such that $L_{{\underline{\alpha}},W} =p^* \widetilde{L}_{{\underline{\alpha}},W}$. In fact, $\widetilde{L}_{{\underline{\alpha}}, W} $ is the rank one $\C$-local system on $\pp(U_W)$, which sends the meridian along the point corresponding to $H_i$ to $\alpha _i$.  Set $$\widetilde{L}_W= \oplus_{{\underline{\alpha}} \in O_{\underline{t}}} \widetilde{L}_{{\underline{\alpha}},W}.$$
$ \widetilde{L}_W $ is indeed a $\bQ$-local system, since $ \widehat{L}_W\vert_{U_W}= p^*  \widetilde{L}_W $.
Putting all together, we get the the following  isomorphisms of mixed Hodge structures:  $$ H^2(U_W, \widehat{L}_W) =H^2(U_W, p^*\widetilde{L}_W) =H^1(\bC^*,\bQ)\otimes H^1(\pp(U_W) ,  \widetilde{L}_W).$$
Here the second isomorphism follows from the K\"{u}nneth formula.
It is clear that tensoring with $H^1(\bC^*,\bQ)$ is the  Tate twist operation. 

We argue that $H^1(\pp(U_W) , \widetilde{L}_W)$ has pure weight 1. In fact, let $\tilde{j}$ denote the inclusion from $\pp(U_W)$ to $\mathbb{P}^1$. Since every $\widetilde{L}_{\alpha,W}$ has $\alpha_i\neq 1$ for all $i$, it is easy to check that $$R\tilde{j}_* ( \widetilde{L}_W[1])= \tilde{j} _{!*}(\widetilde{L}_W[1]),$$ hence $$H^1(\pp(U_W) ,  \widetilde{L}_W)=\mathrm{IH} ^1 (\mathbb{P}^1,  \widetilde{L}_W) .$$ 
Note that $\mathrm{IH} ^1 (\mathbb{P}^1,  \widetilde{L}_W)$ has pure weight 1, hence so does $H^1(\pp(U_W) ,  \widetilde{L}_W) $. Therefore, $ H^2(U_W, \widehat{ L}_W )$ has pure weight 3. Counting the shift by 1, we get that $\widehat{Q}_2\vert_{U^W}$ has pure weight 4.

With the above weights, \cite[Lemma 2.18]{BFNP} shows that the functor $(j_3)_{!*}$ is exact on the short exact sequence (\ref{s32}). Note that \cite[Lemma 2.18]{BFNP} is proved for the category of polarized mixed Hodge modules which are extendable to its closure in the analytic case. It is shown in \cite[page 313]{Sai} that this category is equivalent to the category of mixed Hodge modules in the algebraic case. Then the claim follows.
}
\end{proof}


\subsection{Deletion-restriction method.} To deal with the case where $t_i=1$ for some $i$, we recall first the deletion-restriction method in the general case of a hyperplane arrangement in $\CN$. Without loss of generalities, we assume that $t_1=1$. 
 The hyperplane $H_{1}$ gives a triple of arrangements $(\sA, \sA^{\prime}, \sA^{\prime \prime})$, where $\sA^{\prime} =\sA \setminus \lbrace H_{1} \rbrace $ is an arrangement in $\CN$ with one less hyperplane than $\sA$ and $\sA^{\prime \prime}$ is the arrangement $\sA^{H_{1}}$ we defined before. 
 
 Set $U^{\prime}= \CN \setminus \cup_{i=2}^{r} H_{i}$ and $U''=U'-U$. Consider the inclusions: $$U \overset{i}{ \hookrightarrow} U^{\prime} \overset{j^{\prime}}{ \hookrightarrow} \CN .$$
Then one can extend the local system $L$ on $U$ to $U'$, which is a rank one local system on $U^{\prime}$ denoted by $L'$, 
 such that $L=i^{-1} L^{\prime}$. Moreover, $i_{!*}(L[n])=L^{\prime}[n]$.  Using a similar approach as in the proof of Theorem \ref{lower}, one has the following short exact sequence of perverse sheaves: 
$$ 0\to i_{!*}(L[n]) \to Ri_{\ast} (L[n]) \to Q \to 0 $$
where $Q=L_{H_1}[n-1]$ follows from Proposition \ref{pushfoward}.  Set $L''=L_{H_1}$. We rewrite this short exact sequence as follows:
\be \label{triple}
 0\to L^{\prime}[n] \to Ri_{\ast} (L[n]) \to  L''[n-1] \to 0 \ee
Applying the functor $Rj^{\prime}_{\ast}$ to it, we got a new short exact sequence: 
$$ 0\to Rj^{\prime}_{\ast}(L^{\prime}[n]) \to Rj_{\ast} (L[n]) \to Rj^{\prime}_{\ast} ( L''[n-1]) \to 0 $$
which gives that $$ [Rj_{\ast} L[n]]= [Rj^{\prime}_{\ast}L^{\prime}[n]] +[Rj^{\prime}_{\ast}  L''[n-1]]$$
and 
\be  \label{equality}
\ell (Rj_{\ast} L[n])= \ell (Rj^{\prime}_{\ast}L^{\prime}[n]) + \ell(Rj^{\prime}_{\ast}  L''[n-1]).
\ee

\subsection{Length jump loci via cohomology jump loci.}
Now let us focus again on the dimension 3 case.
For simplicity, we assume that $\sA$ is a central essential hyperplane arrangement in $\bC^3$. Then $\sA'$ and $\sA''$ are both central arrangements. Let $\pp(U)$, $\pp(U'), \pp(U'') $ denote the projection of the complements $U,U',U''$, respectively. 
Assume that $\prod_{i=1}^r t_i=1$. Then the three local systems $(L, L',L'')$ induce three rank one local systems $(\tilde{L}, \tilde{L'}, \tilde{L''}) $ on $(\pp(U), \pp(U'), \pp(U'')) $, respectively. In particular, 
$$ H^3(U,L )=H^2(\pp(U), \tilde{L}), \text{  }  H^3(U',L' )=H^2(\pp(U'), \tilde{L'}), \text{  } H^2(U'',L'' )=H^1(\pp(U''), \tilde{L''})$$
Since we assume that $t_1=1$, we get a  short exact sequence similar to (\ref{triple}) for the new triples  $(\tilde{L}, \tilde{L'}, \tilde{L''}) $. 
 By taking  hypercohomology, one gets a long exact sequence:
 \be  \label{les}
 \begin{split}
 0\to H^1(\pp(U'),\tilde{ L'}) \to H^1(\pp(U),\tilde{ L}) \to & H^0(\pp(U''),\tilde{ L''}) \overset{\delta}{\to}  H^2(\pp(U'),\tilde{ L'}) 
\\  & \to H^2(\pp(U),\tilde{ L}) \to H^1(\pp(U),\tilde{ L''}) \to 0
 \end{split}
 \ee
 where $\delta$ is the boundary map. See \cite[p. 221-222]{D2} for more details about this kind of long exact sequences. 
\br The exactness of (\ref{les}) is proved in \cite[Theorem 4]{Co}. This method was used to show in \cite[Theorem 2]{Co} that, for any fixed integer $d\geq 2$, there exists hyperplane arrangement $\sA$ such that the jump locus $\sV^1(U)$ of its complement $U$  contains positive dimensional components which are translated by characters of order $d$.
\er 
 
We assume now that $t_1=1$ and $t_i\neq 1$ for any $2\leq i \leq r$. Then $L'$ satisfies 
 the assumption in Theorem \ref{dim 3}, hence (\ref{length}) holds as equality for  $ L'$. Now $L''$ is a rank one local system on $U''$, and $\sA''$ is a line arrangement. Then (\ref{length}) also holds as equality for $L''$ as mentioned in the introduction, see Remark \ref{positive results}(b). 
Therefore, the equality (\ref{equality}) gives a way to compute $\ell (Rj_* L[3])$ in this case.
 
 Let us compare (\ref{equality}) with the right hand side of (\ref{length}). Consider the corresponding difference for decomposition factors in the Grothendieck group $G(\bC^3)$.
 If we mod out the skyscraper sheaf $\bC_0$ (supported at the origin) part, the computation is same as the line arrangement case. Since (\ref{length}) holds as equality for the line arrangement case, the difference is only about the multiplicity of $\bC_0$,
  which is  as follows \footnote{Note that this gives a positive answer to Question \ref{question 3.6} in dimension 3 case.}:
 \be \label{diff}
 \dim  H^3(U',L')   + \dim H^2(U'',L'')- \dim H^3(U,L).\ee
If $\prod_{i=1}^r t_i \neq 1$, then all these three terms are 0, hence (\ref{length}) holds as equality. However, in our case $\prod_{i=1}^r t_i=1$, and this difference is same as 
 $$\dim H^2(\pp(U'),\tilde{ L'}) 
 +\dim H^1(\pp(U''),\tilde{ L''})- \dim H^2(\pp(U),\tilde{ L}) .$$
Then the long exact sequence (\ref{les}) shows that  this difference is 0 if and only if the boundary map $\delta $ is trivial. 
 So whether (\ref{length}) holds as equality for the length of $Rj_*L[3]$ depends on if the boundary map $\delta$ is trivial. 
 
 Note that $\dim H^0(\pp(U),\tilde{ L''}) $ is either 0 or 1, hence the difference is at most 1.
In fact, if $\tilde{ L''}$ is not the constant sheaf, then $H^0(U'',L'')=0$, hence $\delta=0$.
On the other hand, if $\tilde{ L''}$ is the constant sheaf, it is  a hard question to determine $\delta$.  
\begin{que} \label{boundary}
Is $\delta$ combinatorially  determined?
\end{que}
 This question has a positive answer if the jump loci of a hyperplane arrangement complement are combinatorially determined.

 Next we give an example where $\delta$ is non-trivial.
  
  \bex \label{counter} Consider the central hyperplane arrangement in $\bC^3$ defined by $z(x-z)^{m_1}(y-z)^{m_2}=0$ where $m_1\geq 2$ and $m_2\geq 2$. These were studied  in \cite{CDP}. 
  We order the hyperplanes as the factors. In particular, $H_1$ is the hyperplane defined by $z=0$.
  
   First, let us choose $t$ such that $\tilde{L''} $ is the constant sheaf. This can be done, if $t_1=1$, $\prod_{i=2}^{m_1+1} t_i=1$ and $\prod_{i=m_1+2}^{m_1+m-2+1} t_i=1$. 
   
 Secondly,  we choose  $t\notin \sV^1(\pp(U))$. The fundamental group of $\pp(U)$ is  a product of two free groups $F_{m_1}\times F_{m_2}$, see \cite[Corollary 1.7]{CDP}. Taking the projection of $H_1$ to be the hyperplane at infinity in $\pp(U)$, it is easy to show that $\sV^1(\pp(U))$ has two irreducible components: the component defined by $t_i=1 $ for any $2\leq i\leq m_1+1$, and the component defined by $t_i=1$ for any $m_1+2\leq i \leq m_1+m_2+1$.
 
 It is clear that we have a lot of choices of $t$ satisfying the above conditions. For such $t$, $H^1(\pp(U), \tilde{L})=0$ and $\dim H^0(\pp(U''),\tilde{L''})=1$, hence $\delta\neq 0$.   
  \eex
  
If the number of indices $i$ for which $t_i=1$ is $\ge 2$, one can repeat the above procedure and use (\ref{equality})  to compute $\ell (Rj_* L_{\underline{t}}[3])$. 
 
\begin{proof}[{\bf Proof of Proposition \ref{upper}}]
If $t_i\neq 1$ for all $i$, then (\ref{length}) shows that $\ell(Rj_* L_{\underline{t}}[3])$ can be computed from $\sV^2_j(U(\sA))$. 
If there is only one $i$ such that $t_i=1$, we assume that $t_1=1$ without loss of generalities. Note that $U''$ is a central line arrangement, hence $\dim H^2(U'',L'')$ is determined by combinational data. Then $\ell(Rj_* L_{\underline{t}}[3])$ can be computed from $ \sV^2_j(U(\sA'))$.  If there are more indices such that $t_i=1$, we repeat   the above procedure and use (\ref{equality}). Then it is clear that $\ell (Rj_* L_{\underline{t}}[3])$ can be computed by the jump loci $\sV_j^2(U(\sB))$ of the complement of all possible  
sub-arrangements $\sB$ of $\sA$ with all $j\geq 1$. The inequality follows by  induction and the fact that the difference (\ref{diff}) is at most 1.
\end{proof}
\br A general formula for  $\ell(Rj_* L[3])$ is  hard to obtain even for central essential hyperplane arrangement in $\bC^3$, since it is not clear if $\delta$ is combinatorially determined.
\er

\section{Intersection cohomology and characteristic cycles}\label{secIC}

As a by-product of the results for plane arrangements in $\bC^3$, we can address other invariants besides lengths in this case. In this section we prove Theorems \ref{cc} and \ref{PS}.

\subsection{General remarks.}
Let $P$ be a perverse sheaf on $\CN$ and let $\CC(P)$  denote the characteristic cycle of $P$. 
Note that, for any rank one local system $L$ on $U$, it is well known that $\CC(L[n])=\CC(\bC_U[n])$, see  \cite[Example 4.3.21(i)]{D2}.     Then the formula  \cite[(0.2.2)]{Gin} implies that $\CC(Rj_{\ast}L[n])  =\CC(Rj_{\ast}\C_U[n]),$ hence
\begin{align*}
\CC(Rj_{\ast}L[n]) & =\CC(Rj_{\ast}\C[n]) \\
& =\sum _{W\in E(\sA)}  \dim H^{w}(U_{W})\cdot \CC( \C_{W}[n-w]) \\
 &=  \sum _{W\in E(\sA)}   \dim H^{w}(U_{W})\cdot T_{W}^{\ast}\CN,
\end{align*}
where $T_{W}^{\ast}\CN$ is the conormal bundle over $W$ and the second equality follows from (\ref{constant}). 
Once (\ref{df}) and (\ref{length}) hold as equality, one can use them to give a formula for $\CC( j_{!*}(L[n]))$ by  induction on the dimension. 

We thank a referee for pointing out to us the following positive answer to a question we posed in the original version of this article:

\begin{prop}\label{upper bound} For a rank one local system $L$ on the complement $U$ of a hyperplane arrangement $\sA$ in $\bC^n$, one has 
$$
\ell (Rj_* L[n])\leq \ell  (Rj_* \bC_U[n]) = \sum _{W\in E(\sA)}   \dim H^{w}(U_{W}).
$$
Moreover, equality holds if and only if $L$ is the constant sheaf.
\end{prop}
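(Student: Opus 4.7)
The plan is to compare characteristic cycles on both sides. Since $\CC(L[n])=\CC(\bC_U[n])$ for any rank one local system $L$, and $\CC$ factors through the Grothendieck group, one has $\CC(Rj_*L[n])=\CC(Rj_*\bC_U[n])$, which by (\ref{constant}) equals $\sum_{W\in E(\sA)}\dim H^w(U_W)\cdot T_W^*\bC^n$; the identity $\ell(Rj_*\bC_U[n])=\sum_W \dim H^w(U_W)$ is itself immediate from (\ref{constant}) (see Remark \ref{positive results}(a)). To derive the inequality, I would expand the composition series
$$
[Rj_*L[n]]=\sum_{W\in E(\sA)}\sum_{[M]} a_{W,[M]}\cdot [\IC(W,M)]\quad\text{in}\quad G(\bC^n),
$$
where $M$ runs over irreducible local systems on the open stratum of the edge $W$ and $a_{W,[M]}\in\bN$, and use that each $\CC(\IC(W,M))$ is a positive cycle of the form $\rank(M)\cdot T_W^*\bC^n+\sum_{W'\subsetneq W}m_{W,W'}^M\cdot T_{W'}^*\bC^n$ with $m_{W,W'}^M\in\bN$.

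Matching the coefficient of $T_W^*\bC^n$ in the two expressions for $\CC(Rj_*L[n])$ yields
$$
\dim H^w(U_W)=\sum_{[M]} a_{W,[M]}\rank(M)+\sum_{W'\supsetneq W}\sum_{[M']}a_{W',[M']}\,m_{W',W}^{M'}\ \geq\ \sum_{[M]}a_{W,[M]},
$$
and summing over $W$ produces the desired inequality $\ell(Rj_*\bC_U[n])\ge \ell(Rj_*L[n])$.

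For the equality case, one direction is trivial. For the converse, equality forces each inequality above to be tight for every $W$: one gets $\rank(M)=1$ whenever $a_{W,[M]}>0$, and all correction multiplicities $m_{W',W}^{M'}$ vanish when $a_{W',[M']}>0$. Restricting $Rj_*L[n]$ to the open stratum $U$ identifies $\IC(\bC^n,L)$ as the unique decomposition factor supported on all of $\bC^n$, with multiplicity one, and the vanishing of its lower correction terms then forces $\CC(\IC(\bC^n,L))=T_{\bC^n}^*\bC^n$. The main delicate step is the implication that a perverse sheaf on $\bC^n$ whose characteristic cycle is supported on the zero section of $T^*\bC^n$ must be a shifted local system on $\bC^n$; this is a standard microlocal fact. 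Consequently $L$ extends from $U$ to a local system on the simply connected space $\bC^n$, forcing $L=\bC_U$.
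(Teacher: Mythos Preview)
Your argument is correct. Both you and the paper derive the inequality from the same core facts: $\CC(Rj_*L[n])=\CC(Rj_*\bC_U[n])$ (independence of $\CC$ on rank one twists, combined with the formula for $\CC(Rj_*\_)$) and positivity of characteristic cycles of simple perverse sheaves. The paper packages this abstractly as $\ell(P)\le\sum_i n_i$ whenever $\CC(P)=\sum_i n_i\Lambda_i$, with equality iff every decomposition factor has an irreducible characteristic cycle; your coefficient-matching on each $T_W^*\bC^n$ is an explicit unfolding of the same bound.

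The genuine difference lies in the equality case. You isolate the factor $\IC(\bC^n,L)$: equality forces all its lower multiplicities $m_{\bC^n,W}^L$ to vanish, so $\CC(\IC(\bC^n,L))=T_{\bC^n}^*\bC^n$, and then the microlocal fact that a perverse sheaf with singular support in the zero section is a shifted local system finishes the job via simple connectedness of $\bC^n$. The paper instead looks at the factors supported on each hyperplane $H_i$: since the multiplicity of $T_{H_i}^*\bC^n$ in $\CC(Rj_*\bC_U[n])$ is $1$, equality produces exactly one decomposition factor with support $H_i$; restricting to $U_1$ (the complement of the codimension~$\ge 2$ edges) and using the short exact sequence (\ref{si}) with $i=1$, these become factors of $Q_1$, whence $(R^1j_*L)|_{U^{H_i}}\neq 0$ for every $i$, forcing each monodromy $t_i=1$ by Remark \ref{remQ1}. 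Your route is cleaner and more self-contained (it does not invoke the filtration from Theorem \ref{lower}), at the cost of importing a microlocal statement from outside the paper; the paper's route stays entirely within its established edge-by-edge framework and extracts slightly more, namely that \emph{every} decomposition factor has irreducible characteristic cycle, not just $\IC(\bC^n,L)$.
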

\begin{proof} For a perverse sheaf  $P$ on $\bC^n$, one has a decomposition
$$ \CC(P)= \sum_i n_i\cdot \Lambda_i,$$
where each $\Lambda_i$ is an irreducible conic Lagrangian cycle and $n_i$ are positive integers, see \cite[Corollary 5.2.24]{D2}. Recall that the characteristic cycle functor factors through the Grothendieck group $G(\CN)$, see \cite[Section 4.3]{D2}. Thus $$\ell(P) \leq \sum_i n_i$$ and equality holds if and only if every decomposition factor of $P$ has exactly one irreducible conic Lagrangian cycle as its characteristics cycle.

This is an equality if $P=Rj_* \bC_U[n]$ by Remark \ref{positive results}(a), which also gives
$$
\ell(Rj_* \bC_U[n]) = \sum _{W\in E(\sA)}   \dim H^{w}(U_{W}).
$$
For a rank one local system $L$ on $U$, the equality $ \CC(Rj_{\ast}L[n])  =\CC(Rj_{\ast}\C[n])$ then implies
\be \label{ub}
\ell (Rj_* L[n])\leq \ell  (Rj_* \bC_U[n]).
\ee

{Moreover, if (\ref{ub}) is an equality, then for every decomposition factor $P$ of $Rj_* L[n]$,  $\CC(P)$ is exactly one irreducible conic Lagrangian cycle. We use now the same notation as in the proof of Theorem \ref{lower}. Thus  (\ref{eqMM}) implies that the decomposition factors of $R(j_1)_* L[n]$ are  exactly the non-zero $j^{-1}_{n,\ldots, 2}P$ with $P$ being all possible decomposition factors of $Rj_*L[n]$. In particular, the support of $Q_1$ is  the union of all hyperplanes in $\sA$ minus the codimension 2 edges. Therefore the restriction  of $R^1j_*L$ to each hyperplane minus the codimension 2 edges is non-zero. This can happen only if $L$ is the constant sheaf. }
\end{proof}

\subsection{Dimension 2 case.}
Now let us focus on $\C^{2}$. 
For an essential line arrangement $\sA$ in $\C^{2}$, we set $E(\sA)=\lbrace \bC^2, H_{1}, \cdots, H_{r}, p_{1}, \cdots, p_{l} \rbrace$, where $\lbrace p_{j} \rbrace$  are the singular points of $\sA$. Assume that the multiplicity associated to each singular point $p_{j}$ is $m_{j}$. 
 Then   $$\CC(Rj_{*}L[2])=\CC(Rj_*\bC_U[2])= T_{\C^{2}}^{\ast}\C^{2}+\sum_{i=1}^{r} T_{H_{i}}^{\ast}\C^{2} +\sum_{j=1}^{l} (m_{j}-1) \cdot T_{p_{j}}^{\ast}\C^{2}. $$
Moreover (\ref{df}) holds as equality in dimension 2 case, which gives us that:
  $$\CC(Rj_{*}L[2]))=\CC(j_{!*}(L[2]))+ \sum_{i\; : \; t_{i}=1} \CC(\IC(H_i, L_{H_{i}}))+ \sum _{j=1}^{l} \dim H^2(U_{p_j}, L)\cdot \CC(\C_{ p_{j}}),$$ 
 Note that if $L=L_{\underline{t}}$ with $t_i=1$, 
$$\CC(\IC(H_i, L_{H_{i}}))=T_{H_{i}}^{\ast}\C^{2}+ \sum T_{p_{j}}^{\ast}\C^{2},$$ where the last sum is over all the singular points $p_{j}\in H_{i}$ such that $ \prod_{p_{j}\in H_{k}}  t_{k} \neq 1$. Here the product is over all lines $H_k$ which contain $p_j$. 

Set  $$\gamma_{j}(t)=  \left\{ \begin{array}{ll}
0 & \text{if } t_{k}=1\text{ for all } k \text{ such that }  p_{j}\in H_{k},\\
m_j-1-\#\lbrace t_{k}=1 \mid p_{j}\in H_{k} \rbrace  &  \text{if }\prod_{p_{j}\in H_{k}}  t_{k} \neq 1, \\
1 & \text{otherwise}.\\
\end{array}\right. $$ 
Then we have the following combinational formula for the characteristic cycles of intersection complex $j_{!*}(L[2])$: 
$$
 \CC(j_{!*}(L_{\underline{t}}[2]))= T_{\C^{2}}^{\ast}\C^{2}+\sum_{i\; :\; t_{i}\neq 1} T_{H_{i}}^{\ast}\C^{2} +\sum_{j=1}^{l} \gamma_{j}(t) \cdot T_{p_{j}}^{\ast}\C^{2}. 
$$  

{ The Euler characteristic number is additive with respect to the distinguished triangles (see \cite[page 95]{D2}), hence the fact that (\ref{df}) holds as equality in dimension 2 case, gives us that $$\chi(U)=\chi(Rj_{*}L_{\underline{t}}[2])= \chi(\IC(\bC^2,L_{\underline{t}}))+ \sum_{i\; : \; t_{i}=1} \chi(\IC(H_i, L_{H_{i}}))+ \sum _{j=1}^{l} \dim H^2(U_{p_j}, L). $$
Note that when $t_i=1$, $$\chi (\IC(H_i, L_{H_{i}}))=-1+ \#\lbrace  p_{j}\in H_{i} \mid \prod_{p_{j}\in H_{k}}  t_{k} \neq 1 \},$$ where the product runs over all $H_k$ such that $p_j\in H_k$. On the other hand, $\chi(U)=1-r+\sum_j (m_j-1)$. Then we get that } $$ 
\chi(j_{!*}(L_{\underline{t}}[2]))=1 -\sum _{i\; :\; t_{i}\neq 1} 1 + \sum_{j=1}^{l} \gamma_{j}(t).
$$

Next we compute $\IH^*(\bC^2, L_{\underline{t}})$.  If $L_{\underline{t}}$ is the constant sheaf, then $\IH^*(\bC^2, L_{\underline{t}})=H^*(\bC^2).$ Now we assume that $L_{\underline{t}} \neq \bC_U$, hence $\IH^0(\bC^2, L_{\underline{t}})=0$. Let $\sB$ be the sub-arrangement of $\sA$ defined by all the hyperplanes $H_i$ with $t_i\neq 1$. Then $\sB\neq \emptyset$, since $L_{\underline{t}} \neq \bC_U$. If $t_i\neq 1$ for all $i$, $\sB=\sA$. 
Let $U(\sB)$ be the complement of the arrangement $\sB$. Let $i$ be the inclusion from $U$ to $U(\sB)$. Then there exists a unique rank one local system $L(\sB)$ on $U(\sB)$ such that $i_{!*}(L_{\underline{t}}[2])=L(\sB)[2]$, hence $\IH^*(\bC^2, L_{\underline{t}})=\IH^*(\bC^2, L(\sB)) $. 

Let $j^{\sB}$ denote the open inclusion from $U(\sB)$ to $\bC^2$. 
Consider the two short exact sequences as in the proof of Theorem \ref{lower} for $Rj^{\sB}_* L(\sB)[2]$. Consider the hypercohomology group long exact sequence associated to the second short exact sequence. Since the last term is a skyscraper sheaf, one gets the isomorphism 
$$H^{-1}(\bC^2, (j^{\sB}_2) _{!*}  R(j^{\sB}_1)_* (L(\sB)[2])) =H^1(U(\sB),L(\sB)) $$
 The first short exact sequence  gives an isomorphism $(j^{\sB}_1)_{!*} L(\sB)[1]= R(j^{\sB}_1)_*L(\sB)[1] $ due to the choice of $\sB$,  hence 
 $$\IH^1(\bC^2, L(\sB))=H^{-1}(\bC^2, (j^{\sB}) _{!*}   (L(\sB)[2])) = H^{-1}(\bC^2, (j^{\sB}_2) _{!*}  R(j^{\sB}_1)_* (L(\sB)[2])).$$
Putting all this together, one gets that $\IH^1(\bC^2, L_{\underline{t}})=H^1(U(\sB), L(\sB))$, and  $\IH^2(\bC^2, L_{\underline{t}})$ can be computed from $\IH^1(\bC^2, L_{\underline{t}})$ using the formula for $ \chi(j_{!*}(L_{\underline{t}}[2])).$
 
\subsection{Dimension 3 case.}
Due to Theorem \ref{dim 3}, one can  compute $\CC( j_{!*}(L_{\underline{t}}[3]))$ for dimension 3 case, when $t_i \neq 1$ for all $i$.  However the formula would be quite complicated, as we can already see from the  dimension 2 case in previous subsection. 
For simplicity, we only give the formula for central essential hyperplane arrangements in $\bC^3$ and local systems $L$ with same monodromy around each hyperplane. 

Let $\sA$ be a central essential hyperplane arrangement in $\bC^3$. Let $f_i$  be a degree one polynomial  defining  $H_i$. 
The Milnor fiber $F$ associated to $\sA$ is defined by $\prod_{i=1}^r f_i =1 $ in $\bC^3$. The monodromy action on $F$ is given by $h:F\to F, $ $h(x)=\exp(2 \pi i /r)\cdot x$. Then the monodromy action on $H^*(F)$ is semi-simple and has order $r$. Let $H^*(F)_s$ denote the eigenspace of $H^*(F)$ with eigenvalue $s$.


\begin{proof}[{\bf Proof of Theorem \ref{cc}}]
Note that for $s\neq 1$ and any 2-dimensional edge $H_i$ in $\bC^3$, $H^1(U_x, L)=0$ for any generic point $x$ in $H_i$ and all $i$. Theorem \ref{dim 3} shows that (\ref{df}) holds as equality for $L_s$ when $s\neq 1$, which  gives us that 
$$\CC(Rj_{\ast}L[3])= \CC(j_{!\ast}(L[3]))+$$
$$+\sum_{j=1}^l \dim H^2(U_{\Lambda_j}, L_{\Lambda_j}) \cdot\CC( \IC(\Lambda_j, L^{\Lambda_j}))+\dim H^3(U,L) \cdot \CC(\bC_0).$$
Note that $U_{\Lambda_j} =\Lambda_j-0= \bC^*$ for all $j$. The representation of the rank one local system $L^{\Lambda_j}$ sends the only generator of the fundamental group to $s^{r-m_j}$. So 
$$\CC(\IC(\Lambda_j, L^{\Lambda_j}))=\left\{ \begin{array}{ll}
T^*_{\Lambda_j} \bC^3,  &  \text{if }s^{r-m_j}=1, \\
T^*_{\Lambda_j} \bC^3+T^*_0 \bC^3, & \text{otherwise}. \\ 
\end{array}\right.
$$ 
Since $\sA_{\Lambda_j}$ is a central line arrangement with $m_j$ lines and $s\neq 1$, 
$$\dim H^2(U_{\Lambda_j}, L_{\Lambda_j})=\left\{ \begin{array}{ll}
m_j-2,  &   \text{if }s^{m_j}=1, \\
0, & \text{otherwise}. \\ 
\end{array}\right.
$$ 
Since the monodromy action on $H^*(F)$ is semi-simple, it follows from \cite[Corollary 6.4.9]{D2} that $\dim H^3(U, L_s)= \dim H^2(F)_s$ and $\dim H^1(U, L_s)= \dim H^1(F)_s $ when $s\neq 1$. Recall that
$$
\CC(Rj_{\ast}L[3])  =\CC(Rj_{\ast}(\C_U[3]))=$$
$$ =T_{\C^{3}}^{\ast}\C^{3}+\sum_{i=1}^{r} T_{H_{i}}^{\ast}\C^{3} +\sum_{j=1}^{l} (m_{j}-1) \cdot T_{\Lambda_{j}}^{\ast}\C^{3}+ (\sum_{j=1}^l  (m_{j}-1) -r+1) \cdot T_{0}^{\ast}\C^{3}
$$
Putting all this  together, we get the desired formula for  $\CC(j_{!*}(L_s[3]))$. { Since we are dealing with central hyperplane arrangements,} the formula for $\chi(\bC^3, j_{!*}(L_s[3]))$ follows from the local index formula for characteristic cycles, e.g.  \cite[Theorem 4.3.25]{D2}. 

Next we compute the intersection cohomology $\IH^*(\bC^3, L_s)$. Since $s\neq 1$, $\IH^0(\bC^3, L_s)=0$. Now $\sA$ is  a central arrangement, hence $\IH^3(\bC^3, L_s)= H^0 (j_{!*} L_s[3])_0=0$. Here $H^0 (j_{!*} L_s[3])_0$ is 0-th cohomology  of the stalk of $ j_{!*} L_s[3]$ at the origin, and it is 0 due to the basic properties of intermediate extension. 

Now we recall the three short exact sequences (\ref{s31}), (\ref{s32}, (\ref{s33}) from the proof of Theorem \ref{dim 3}. The proof of Theorem \ref{dim 3}  showed that the functor $(j_3)_{!*}$ is exact on the short exact sequence (\ref{s32}) when $s\neq 1$. Forgetting the conjugated part, we have a new short exact sequence:
$$0 \to j_{!\ast}   (L_s[3]) \to   (j_3)_{!*} R(j_2 \circ j_1)_{\ast} (L_s[3]) \to (j_3)_{!*} Q_2 \to 0  $$
Consider the hypercohomology long exact sequence associated to this short exact sequence. Since the support of $(j_3)_{!*} Q_2 $ has dimension 1 and $(j_3)_{!*} Q_2 $ is perverse,  one gets that $H^{i}(\bC^3, (j_3)_{!*} Q_2)=0$ for $i<-1$ (\cite[Proposition 5.2.20]{D2}), which implies the following isomorphism $$\IH^1(\bC^3, L_s)=H^{-2}(\bC^3, (j_3)_{!*} R(j_2 \circ j_1)_{\ast} (L_s[3])).$$

Consider the  hypercohomology long exact sequence associated to the short exact sequence (\ref{s33}). Since $Q_3$ is the skyscraper sheaf, one gets that 
 $$H^{-2}(\bC^3, (j_3)_{!*} R(j_2 \circ j_1)_{\ast} (L_s[3]))=H^1(U,L_s).$$
 Hence $\dim \IH^1(\bC^3, L_s)=\dim H^1(U, L_s)=\dim H^1(F)_s$. 
 Note that $$-\chi(\bC^3, j_{!*}(L_s[3]))= \sum_{i=0}^3 (-1)^i \dim \IH^i(\bC^3, L_s)= \dim \IH^2(\bC^3, L_s)-\dim \IH^1(\bC^3, L_s) .$$
 Then the formula for $ \dim \IH^2(\bC^3, L_s)$ follows.  
\end{proof}

\begin{proof}[{\bf Proof of Theorem \ref{PS}}]
 
Using Theorem \ref{cc}, all that is left is to compute $\dim H^2(F)_s$. Papadima-Suciu \cite[Theorem 1.2]{PS} showed that $\Delta^1(t)=(t-1)^{r-1}(t^2+t+1)^{\beta_3(\sA)}$ . Note that if $3\nmid r,$ then $\beta_3(\sA)=0$. One has
$$\dim H^1(F)_s= \left\{ \begin{array}{ll}
\beta_3(\sA), &   s^3=1, s^r=1.\\
0, & \text{else}, \\ 
\end{array}\right. $$ Recall that $$\dfrac{\Delta^0(t) \Delta^2(t)}{\Delta^1(t)}= (t^r-1)^{\chi(F)/r}.$$
It is easy to check that $ \chi(F)/r= \binom{r-2}{2}-n_3(\sA),$ 
hence $$\Delta^2(t)=(t-1)^{r-2} (t^r-1)^{\binom{r-2}{2}-n_3(\sA)}(t^2+t+1)^{\beta_3(\sA)}.$$
{ Note that $\dim H^3(U, L_s)=\dim H^2(F)_s$ for $s\neq 1$.} 
Then the claims follow by direct computations.
\end{proof}



\begin{thebibliography}{BGLW17}
\bibitem[AB10]{AB1} T. Abebaw, R. B\o{}gvad, {\it Decomposition of D-modules over a hyperplane arrangement in the plane}. Ark. Mat. 48 (2010), no. 2, 211-229.

\bibitem[AB12]{AB2} T. Abebaw, R. B\o{}gvad, {\it Decomposition factors of D-modules on hyperplane configurations in general position.} Proc. Amer. Math. Soc. 140 (2012), no. 8, 2699--2711. 

\bibitem[BG18]{BG} R. B\o{}gvad, I. Gon\c{c}alves, {\it Length and decomposition of the cohomology of the complement to a hyperplane arrangement.} arXiv:1703.07662. To appear in Proc. Amer. Math. Soc.


\bibitem[BBD82]{BBD} A. Bei­linson, J. Bernstein, P. Deligne, {\it Faisceaux pervers.}  Ast\'{e}risque, 100, Soc. Math. France, Paris, 1982. 

\bibitem[Bib16]{Bib} C. Bibby, {\it Cohomology of abelian arrangements.} Proc. Amer. Math. Soc. 144 (2016), pp. 3093-3104.


\bibitem[BFNP09]{BFNP}
P. Brosnan, H. Fang, Z. Nie, G. Pearlstein, {\it Singularities of admissible normal functions.
With an appendix by Najmuddin Fakhruddin.} Invent. Math. 177 (2009), no. 3, 599-629. 

\bibitem[Bud15]{B-ls} N. Budur, {\it Bernstein-Sato ideals and local systems.} Ann. Inst. Fourier 65 no. 2, 549-603 (2015).


\bibitem[BGLW17]{BGLW} N. Budur, P. Gatti, Y. Liu, B. Wang, {\it On the length of perverse sheaves and $\sD$-modules.} arXiv:1709.00876. To appear in Bull. Math. Soc. Sci. Math. Roumanie.

\bibitem[BLSW17]{BLSW} N. Budur Y. Liu, L. Saumell, B. Wang, {\it Cohomology support loci of local systems.}   Michigan Math. J. 66 (2017), no. 2, 295-307.





\bibitem[BS10]{BS} N. Budur, M. Saito, {\it Jumping coefficients and spectrum of a hyperplane arrangement.} Math. Ann. 347 (2010), no. 3, 545-579. 



\bibitem[BW15a]{BW} N. Budur, B. Wang, {\it Cohomology jump loci of quasi-projective varieties.}  Ann. Sci. \'Ec. Norm. Sup\'er. (4) 48 (2015), no. 1, 227--236. 




\bibitem[CDP05]{CDP}  A. D. R. Choudary, A. Dimca, \c{S}. Papadima, {\it  Some analogs of Zariski's theorem on nodal line arrangements.}  Algebr. Geom. Topol. 5 (2005), 691-711. 

\bibitem[Coh02]{Co} D. C. Cohen, {\it Triples of arrangements and local systems.}  Proc. Amer. Math. Soc. 130 (2002), no. 10, 3025-3031.



\bibitem[DM09]{DM} M. A. de Cataldo, L. Migliorini,  {\it The decomposition theorem, perverse sheaves and the topology of algebraic maps.} Bull. Amer. Math. Soc. (N.S.) 46 (2009), no. 4, 535--633.


\bibitem[DSY15]{DSY} G. Denham, A. Suciu, S. Yuzvinsky, {\it Abelian duality and propagation of resonance}, Selecta Math. 23 (2017), no. 4, 2331-2367.

\bibitem[Dim92]{D1} A. Dimca, {\it Singularities and topology of hypersurfaces},
Universitext, Springer-Verlag, New York 1992.


\bibitem[Dim04]{D2} A. Dimca, {\it Sheaves in topology.} Universitext. Springer-Verlag, Berlin, 2004. xvi+236 pp.

\bibitem[Dim17]{D3} A. Dimca, {\it Hyperplane arrangements. An introduction.} Universitext. Springer, Cham, 2017. xii+200 pp.



\bibitem[Dur83]{Dur} A. Durfee,
{\it Neighborhoods of algebraic sets},
Trans. Amer. Math. Soc.  276 (1983), no. 2, 517--530.



\bibitem[Gin86]{Gin} V. Ginsburg, {\it Characteristic varieties and vanishing cycles.} Invent. Math. 84 (1986), no. 2, 327-402. 




\bibitem[Lo93]{Lo}  E. Looijenga, {\it Cohomology of $M_3$ and $M^1_3$,} Mapping class groups and moduli spaces of Riemann surfaces, (G\"ottingen, 1991/Seattle, WA, 1991), Comtemp. Math. 150, 1993,
205-228.


\bibitem[NT05]{NT} P. Nang, K. Takeuchi, {\it Characteristic cycles of perverse sheaves and Milnor fibers}.  
 Math. Z. 249 (2005), no. 3, 493-511.

\bibitem[Oak15]{Oa} T. Oaku, {\it Length and multiplicity of the local cohomology with support in a hyperplane arrangement.}  arXiv:1509.01813.

\bibitem[OT92]{OT} P. Orlik, H. Terao, {\it Arrangements of hyperplanes.}  Springer-Verlag, Berlin, 1992. xviii+325 pp.

\bibitem[PS17]{PS} \c{S}. Papadima, A. Suciu, {\it  The Milnor fibration of a hyperplane arrangement: from modular resonance to algebraic monodromy.}  Proc. Lond. Math. Soc. (3) 114 (2017), no. 6, 961-1004. 

\bibitem[Pe17]{Pe} D. Petersen, {\it A spectral sequence for stratified spaces and configuration spaces of points.}
Geom. Topol. 21 (2017), no. 4, 2527-2555.




\bibitem[Sa88]{Sa2}{M. Saito}, {\it Modules de Hodge polarisables}, Publ. Res. Inst. Math. Sci. 24, no. 6 (1988), 849-995.

\bibitem[Sai90]{Sai} M. Saito, {\it Mixed Hodge modules.} Publ. Res. Inst. Math. Sci. 26 (1990), 221-333.

\bibitem[STV95]{STV} V. Schechtman, H. Terao, A. Varchenko, {\it Local systems over complements of hyperplanes and the Kac-Kazhdan conditions for singular vectors.} J. Pure Appl. Algebra 100 (1995), 93-
102.


 \end{thebibliography}
\end{document}